\theoremstyle{plain}
\newtheorem{prop}{Proposition}[section]
\newtheorem{theo}[prop]{Theorem}
\newtheorem{coro}[prop]{Corollary}
\theoremstyle{definition}
\newtheorem{defi}[prop]{Definition}
\newtheorem{remar}[prop]{Remark}
\newcommand{\lra}{\longrightarrow}
\newcommand{\ra}{\rightarrow}
\newcommand{\cC}{{\mathcal C}}
\newcommand{\cE}{{\mathcal E}} 
\newcommand{\cF}{{\mathcal F}}
\newcommand{\cK}{{\mathcal K}}
\newcommand{\cW}{{\mathcal W}}
\newcommand{\bC}{\mathbf{C}}
\newcommand{\bI}{\mathbf{I}}
\newcommand{\bK}{\mathbf{K}}
\newcommand{\bbC}{\mathbb{C}}
\newcommand{\bbQ}{\mathbb{Q}}
\newcommand{\bbS}{\mathbb{S}}
\newcommand{\bbZ}{\mathbb{Z}}
\newcommand{\ul}[1]{\underline{#1}}
\newcommand{\fU}{{\mathfrak U}}
\newcommand{\fX}{{\mathfrak X}}
\newcommand{\fY}{{\mathcal Y }}
\newcommand{\rH}{{\mathrm H}}
\newcommand{\rK}{{\mathrm K}}
\newcommand{\AffC}{{\mathrm{Aff}_\bC^\mathrm{f.t.}}}
\newcommand{\AffCsm}{{\mathrm{Aff}_\bC^\mathrm{sm}}}
\newcommand{\an}{\mathrm{an}}
\newcommand{\rB}{\mathrm{B}}
\newcommand{\Ch}{\mathrm{Ch}}
\newcommand{\coker}{\mathrm{coker}}
\newcommand{\cl}{\mathrm{cl}}
\newcommand{\DGCAT}{\mathrm{DGCAT}}
\newcommand{\Fun}{\mathrm{Fun}}
\newcommand{\han}{\mathrm{han}}
\newcommand{\HC}{\mathrm{HC}}
\newcommand{\HH}{\mathrm{HH}}
\newcommand{\Hom}{\mathrm{Hom}}
\newcommand{\HP}{\mathrm{HP}}
\newcommand{\bHP}{\mathbf{HP}}
\newcommand{\Lan}{{\mathrm{Lan}}}
\newcommand{\Loc}{\mathrm{Loc}}
\newcommand{\LOC}{\mathrm{LOC}}
\newcommand{\op}{{op}}
\newcommand{\Or}{\mathrm{Or}}
\newcommand{\Perf}{\mathrm{Perf}}
\newcommand{\red}{\mathrm{red}}
\newcommand{\SchC}{{\mathrm{Sch}_\bC^\mathrm{f.t.}}}
\newcommand{\SchCsm}{{\mathrm{Sch}_\bC^\mathrm{sm}}}
\newcommand{\Shv}{\mathrm{Shv}}
\newcommand{\Sing}{\mathrm{Sing}}
\newcommand{\sm}{\mathrm{sm}}
\newcommand{\Sp}{\mathrm{Sp}}
\newcommand{\Spc}{\mathrm{Spc}}
\newcommand{\Spec}{\mathrm{Spec}}
\newcommand{\st}{\mathrm{st}}
\newcommand{\TC}{\mathrm{TC}}
\newcommand{\THH}{\mathrm{THH}}
\newcommand{\tp}{\mathrm{top}}
\newcommand{\rTop}{\mathrm{Top}}
\DeclareMathOperator*{\colim}{colim}
\author{Andrei Konovalov}
\title{Nilpotent invariance of semi-topological K-theory of dg-algebras and the lattice conjecture}
\thanks{The author was partially supported by Basis Foundation grant 18-1-6-95-1, Leader
(Math), by the HSE University Basic Research Program, and by Simons-IUM fellowship}
\address{National Research University Higher School of Economics, Russian Federation}
\email{kon\_an\_litsey@list.ru, akonovalov@hse.ru}
\begin{document}

\maketitle

\begin{abstract}
    We show existence of a natural rational structure on periodic cyclic homology, conjectured by L. Katzarkov, M. Kontsevich, T. Pantev, for several classes of dg-categories, including proper connective $\bbC$-dg-algebras and dg-categories of local systems. The main ingredient is derived nilpotent invariance of A. Blanc's semi-topological K-theory, which we establish along the way.
    
\end{abstract}

\section{Introduction}

One of the features of algebraic geometry over the field of complex numbers is existence of a pure Hodge structure on de Rham cohomology of smooth proper varieties. %check add reference Deligne?
%Starting from ideas of Kontsevich Bondal-Kapranov, 
Noncommutative algebraic geometry studies $k$-dg-categories, aka noncommutative schemes. A fruitful idea is % that % 
in the realm of dg-categories one can still define many properties and invariants of schemes, such as smoothness, properness, (direct sums of) %Hodge 
Dolbeault cohomology groups and de Rham cohomology, --  the correspondence between the commutative and noncommutative worlds being the functor assigning to a scheme $X/k$ the dg-category of perfect complexes %on $X$
$\Perf(X)$. The last two invariants are presented by Hochschild homology and periodic cyclic homology and they come with the Hodge-de Rham spectral sequence, which degenerates for smooth proper dg-categories when $k$ is a field of characteristic 0 (\cite{Kal}%, see also \cite{M} and \cite{KKM} for other approaches
). 
In \cite{KKP}, the authors consider noncommutative schemes over $\bbC$ and suggest to look for a counterpart for Hodge structures, which are already partially presented thanks to the aforementioned degeneration of the spectral sequence. One of the missing parts is a natural rational structure, i.e. a functor $F: dgCat_\bbC \lra Mod_\bbQ$ and a natural transformation $F \ra \HP(\cdot/\bbC)$, such that, for a smooth proper dg-category $T$, the induced morphism $F(T) \otimes_\bbQ\bbC \ra \HP(T/\bbC)$ is an equivalence (i.e. a quasi-isomorphism).

In this paper, we consider the topological K-theory of dg-categories functor $\rK^\tp$, defined by A. Blanc as a promising candidate for the role of integral structure. We prove a statement about rational structure, which we later will refer to as the "lattice conjecture", in several cases, which are listed below (cf. Corollary~\ref{lcnil}, Theorem~\ref{lccnprop}, Corollary~\ref{lcdsch}, Theorem~\ref{lclocsys}).

\begin{theo}\label{lcall}

Let $LC\subset dgCat_\bbC$ be the full subcategory of dg-categories on which the natural transformation of functors $\rK^\tp(T)\otimes \bbC \ra \HP(T/\bbC)$ is an equivalence. Then $LC$ contains the following classes of dg-categories: a) $T = \Perf(B)$ where $B$ is a connected proper dg-algebra; b) $T = \Perf(B)$ where $B$ is a connected dg-algebra, such that $\rH_0B$ is a nilpotent extension of a commutative $\bbC$-algebra of finite type; c) $T = \Loc(M, \mathbb{C})$ where $M$ is a connected locally contractible space with some condition on its fundamental group (see Theorem~\ref{lclocsys});
d) $T = \Perf(\fX)$ where $\fX$ is a derived $\bbC$-scheme, such that its classical part is a separated scheme of finite type.
$LC$ satisfies 2-out-of-3 property with respect to exact triples of dg-categories and is closed under Morita-equivalences and taking retracts.

\end{theo}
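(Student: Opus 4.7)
The plan is to dispose of the formal closure properties first, then reduce each of the classes (a)--(d) to a small set of known base cases via devissage, with the crucial new input being derived nilpotent invariance of $\rK^\tp$.

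The closure properties are formal. Both $\rK^\tp$ and $\HP(\cdot/\bbC)$ are additive functors out of $dgCat_\bbC$ landing in a stable $\infty$-category (of spectra, respectively of $\bbC$-complexes); they factor through Morita localization and carry exact triples of dg-categories to fiber sequences. The comparison map $\rK^\tp \otimes \bbC \to \HP(\cdot/\bbC)$ is a natural transformation of such functors, so the locus $LC$ on which it is an equivalence is automatically closed under retracts, Morita-invariant, and 2-out-of-3 along exact triples.

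For the base cases I would appeal to Blanc's theorem that $\Perf(X) \in LC$ for $X$ smooth proper over $\bbC$; combining this with cdh (or smooth Mayer--Vietoris with resolution of singularities) descent promotes the lattice conjecture to $\Perf(X)$ for any separated finite-type $\bbC$-scheme, and in particular to $\Perf(A)$ for any finitely generated commutative $\bbC$-algebra $A$. For (a), the connectedness of the proper dg-algebra $B$ means $\rH_0 B = \bbC$ after truncation; I would filter $B$ by its Postnikov tower and observe that the associated graded pieces are shifts of $\bbC$-bimodules, so that 2-out-of-3 and the anchor $\Perf(\bbC) \in LC$ force $\Perf(B) \in LC$. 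Properness is needed here only to ensure that the truncations genuinely fit into exact triples of perfect module categories.

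The essential new ingredient is derived nilpotent invariance of $\rK^\tp$, proved elsewhere in the paper. Since $\HP$ is derived nilpotent invariant as well (by Goodwillie's theorem combined with the HKR filtration on derived de Rham cohomology), the natural transformation $\rK^\tp \otimes \bbC \to \HP$ identifies nilpotent extensions on both sides; hence $LC$ is stable under derived nilpotent extensions. Applying this in (b) reduces from $B$ with $\rH_0 B$ a nilpotent thickening of a finite-type $A$ to the commutative finite-type case handled above, and in (d) reduces a derived scheme $\fX$ to its classical part, which is a separated finite-type scheme by hypothesis. For (c), the equivalence $\Loc(M, \bbC) \simeq \Perf(C_*(\Omega M; \bbC))$ expresses $T$ as perfect modules over a dg-algebra whose structure is governed by $\pi_1(M)$; the hypothesis on $\pi_1$ is precisely what is needed to place this dg-algebra within the scope of (a) or (b) after a bar-resolution. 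The principal obstacle is the derived nilpotent invariance step itself: unlike algebraic K-theory, $\rK^\tp$ is defined by a topological completion, and controlling this completion along derived Postnikov towers is far from automatic and should consume most of the technical work of the paper.
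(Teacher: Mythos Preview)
Your treatment of the closure properties and of cases (b) and (d) is correct and matches the paper. There are genuine gaps in your handling of (a) and (c).

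For (a): the word ``connected'' in the statement should be read as ``connective'' (cf.\ Theorem~\ref{lccnprop}); the paper does not assume $\rH_0 B = \bbC$. More importantly, your Postnikov/2-out-of-3 argument does not work as stated: a square-zero extension $\tau_{\le n+1}B \to \tau_{\le n}B$ does \emph{not} induce an exact triple of perfect-module categories. The base-change functor $\Perf(\tau_{\le n+1}B) \to \Perf(\tau_{\le n}B)$ has trivial kernel (derived Nakayama) without being an equivalence, so it is not a Verdier localization and there is no third term to feed into 2-out-of-3; properness does nothing to repair this. The correct mechanism along the Postnikov tower is nil-invariance, not 2-out-of-3, and this is exactly what the paper does (Corollary~\ref{lcnil}): the composite $B \to \rH_0 B \to (\rH_0 B)^{\mathrm{red}}$ is a nilpotent extension of connective algebras, and $(\rH_0 B)^{\mathrm{red}}$ is a semisimple finite-dimensional $\bbC$-algebra, hence Morita-equivalent to a finite product of copies of $\bbC$ by Wedderburn.

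For (c): your claim that the hypothesis on $\pi_1 M$ places $C_\ast(\Omega M)$ ``within the scope of (a) or (b) after a bar-resolution'' is wrong. Nil-invariance does reduce $C_\ast(\Omega M)$ to $\bbC[\pi_1 M]$ (Corollary~\ref{locsysnil}), but for the groups listed in Theorem~\ref{lclocsys} (hyperbolic, CAT(0), mapping class groups, \ldots) the group algebra $\bbC[G]$ is neither proper nor a nilpotent thickening of a commutative finite-type algebra, so neither (a) nor (b) applies. The paper's argument (Proposition~\ref{lcgpalg}) is of an entirely different nature: one forms the assembly square~(\ref{FJKHP}) comparing $\rK^\tp$ and $\HP$ over the orbit category, invokes the rational Farrell--Jones conjecture to make the top map an equivalence, uses Burghelea's computation~(\ref{HPcalc}) together with the Burghelea conjecture to make the bottom map an equivalence, and applies the finite-cyclic-group case (which \emph{is} covered by (a)) on the left column. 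The hypotheses on $\pi_1 M$ are precisely the classes of groups for which both the Farrell--Jones and Burghelea conjectures are currently known.
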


In section 4.7, \cite{Bla}, the author considered finite-dimensional classical algebras and used a variant of $\rK^\tp$, called pseudo-connective topological K-theory, to provide periodic cyclic homology of such algebras with a rational structure. Since these algebras lie in the class a) of Theorem~\ref{lcall}, it follows that better-behaving topological K-theory works just as well, which can also be seen directly (see Proposition~\ref{Kstneg}). We also explain in Proposition~\ref{lcfddg} that Orlov's result (Theorem 2.19, \cite{Orl}) implies that finite-dimensional smooth $\bbC$-dg-algebras also lie in $LC$.

Topological K-theory of dg-categories is defined using a topological realization functor, which generalizes the procedure of analytification from $\bbC$-varieties to arbitrary (spectrum-valued) invariants of schemes. To prove  Theorem~\ref{lcall}, we study the behaviour of the realization functor, focusing on the case of Hochschild-type invariants. It allows us to establish the following result, crucial for proving the main theorem.

\begin{theo}\label{Kstnilintro}

Let $v: B \ra A$ be a nilpotent extension of connective $\bbC$-dg-algebras. Then the induced map $\rK^\st(B) \ra \rK^\st(A)$ is an equivalence.

\end{theo}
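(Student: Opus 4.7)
The plan is to reduce the nilpotent invariance of $\rK^\st$ to the classical nilpotent invariance of periodic cyclic homology $\HP(-/\bbC)$, with the bridge supplied by the paper's earlier analysis of how Blanc's topological realization interacts with Hochschild-type invariants.

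Since $\rK^\st$ is defined by applying topological realization to the algebraic K-theory presheaf $R \mapsto \rK(B \otimes_\bbC R)$ on smooth affine $\bbC$-schemes, and since this realization is exact, it suffices to analyze, level-wise in the smooth resolution, the fiber of $\rK(B\otimes_\bbC R)\to\rK(A\otimes_\bbC R)$. For each such $R$ the base-changed map $B\otimes_\bbC R\to A\otimes_\bbC R$ is again a nilpotent extension of connective dg-algebras, so a Goodwillie-type trace --- either the Dundas--Goodwillie--McCarthy theorem comparing relative $\rK$ with relative $\TC$, or Goodwillie's identification of relative $\rK\otimes\bbQ$ with relative $\HC^-\otimes\bbQ$, which is integral over $\bbC$ because the relative term is automatically $\bbQ$-linear --- identifies the relative K-theory with a Hochschild-type relative invariant. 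Taking realization then identifies the fiber of $\rK^\st(B)\to\rK^\st(A)$ with the realization of the corresponding relative $\HC^-$ (or $\TC$).

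Combining this with the paper's study of realization on Hochschild-type invariants, I would argue that the realized relative $\HC^-$ (or relative $\TC$) becomes equivalent to the realized relative $\HP$: the discrepancy, a piece built from cyclic homology $\HC$, vanishes on a nilpotent fiber once realized. Finally, the Goodwillie--Feigin--Tsygan theorem gives $\HP(B/\bbC)\simeq \HP(A/\bbC)$; since realization preserves this equivalence, the fiber of $\rK^\st(B)\to\rK^\st(A)$ is contractible, as required.

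The principal obstacle is the second step: the clean comparison between realized $\HC^-$ (or $\TC$) and realized $\HP$ on nilpotent relative terms. It requires verifying compatibility of Blanc's realization functor with the $S^1$-equivariant and, in the integral setting, cyclotomic structure producing negative and periodic cyclic homology, together with a vanishing argument for the relative cyclic piece after realization in the nilpotent situation. This is precisely the material developed in the paper prior to the theorem, after which the three-step reduction sketched above yields the integral equivalence $\rK^\st(B)\simeq \rK^\st(A)$.
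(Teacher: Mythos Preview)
Your proposal is correct and follows essentially the same route as the paper: express $\rK^\st$ as the realization of the algebraic K-theory presheaf, apply Dundas--Goodwillie--McCarthy termwise to the nilpotent extensions $B\otimes_\bbC R\to A\otimes_\bbC R$, use $\TC\simeq\HC^-$ over $\bbQ$, invoke the paper's comparison $Re(\HC^-(T\otimes_\bbC\cdot))\simeq Re(\HP(T\otimes_\bbC\cdot))$, and conclude via Goodwillie's nil-invariance of $\HP$. One small clarification: the replacement of realized $\HC^-$ by realized $\HP$ does not depend on the nilpotent situation at all --- the paper proves $Re(\HC(T\otimes_\bbC\cdot/k))\simeq 0$ for \emph{every} $T$, so $Re(\HC^-)\simeq Re(\HP)$ holds for $B$ and $A$ separately, and no delicate compatibility with cyclotomic or $S^1$-equivariant structure is required beyond the identification $\TC\simeq\HC^-$ for $\bbQ$-algebras.
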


After derived nil-invariance is established, proving most of the cases does not require much work, but for the case of local systems on $M$ we need to understand topological K-theory of group algebras. This we can do only under some assumptions on the group, which corresponds to putting assumptions on the fundamental group of $M$. Concretely, we ask the group to
satisfy the Burghelea conjecture and the rational Farrell-Jones conjecture, which are both established for a large class of groups, -- and under these assumptions we prove the lattice conjecture. We also suggest a new approach to constructing a counterexample to the Farrell-Jones conjecture.  

\ \ 

\subsubsection*{Structure of the paper}

In the first section, we state the two main theorems and recall some motivation behind the lattice conjecture. We also fix some conventions that will be used throughout the paper and sketch the structure of the exposition.

The second section is devoted to considering two realizations functors, which allow one to extend the functor of taking the space of complex points with analytic topology from schemes to spectral presheaves. We show that these two functors coincide, which allows us later to use properties of both.

The realization formalism was used in \cite{Bla} to define semi-topological K-theory, which after inverting the Bott element becomes topological K-theory. In the third section, we recall the necessary definitions and statements from \cite{Bla}. 

Semi-topological K-theory of a dg-category $T$ is built from algebraic K-theories of different base-changes of $T$. And, while algebraic K-theory itself is a very complicated invariant, to some extent, it can be approximated by Hochshild and (variants of) cyclic homology. In the fourth section, we consider realizations of Hochschild-type invariants. In particular, we show that the realizations of $\HH$ and $\HC$ vanish.

In the fifth section, we recall the definition of derived nilpotent invariance %add also known as truncatedness
and prove the Theorem~\ref{Kstnilintro} using the computations from the previous section.

The last section is devoted to considering consequences of Theorem~\ref{Kstnilintro}. In particular, we prove Theorem~\ref{lcall} and sketch some other possible applications of our ideas.

\subsubsection*{Conventions}

Most of the time, we work over the field of complex numbers $\bbC$. This is because the very first constructions are using existence of an analytification functor, assigning to a separated scheme of finite type over $\bbC$ its space of complex points with analytic topology.

We freely use the language of $\infty$-categories as in the works of J. Lurie. We do not use heavy machinery though; the main benefit is to avoid a lot of cofibrant replacements when working with localizations and to work conveniently with categories enriched in homotopy types and spectra. 

By a left Kan extension we mean the $\infty$-categorical version, see \cite{LurHTT}, Definition 4.3.3.2, and similarly other colimits in $\infty$-categories (which may be presented as homotopy colimits in appropriate model structures). %(see also Appendix A.3 for comparison with simplicially enriched)

By $Mod_A$, we denote the dg-category of (right) modules over a dg-algebra $A$. By $\Perf(A)$ we denote the full subcategory of compact objects in $Mod_A$.

The dg-category of $A$-modules has a forgetful (aka generalised Eilenberg-MacLane) functor to the stable $\infty$-category $\Sp$ of spectra. Since we are working with K-theory, it would be convenient often to consider such invariants as $\HH(\cdot/k)$, $\HP(\cdot/k)$ as spectra forgetting the $k$-linear structure. The (semi-)topological K-theory functor will as well take values in the category of spectra, though, for the purposes of the lattice conjecture, it can be rationalized at any moment.

Small triangulated (=idempotent-complete pretriangulated) $\bbC$-dg-categories form an $\infty$-category $dgCat_\bbC$. To any small dg-category $T$ one can assign its triangulated envelope $\Perf(T)$, -- this can be thought of as the fibrant replacement functor in the Tabuada's model category of small $\bbC$-dg-categories and Morita-equivalences (\cite{Tab}). This theory is equivalent to the $\infty$-category of idempotent-complete $\bbC$-linear small stable $\infty$-categories (Corollary 5.5, \cite{Cohn}).

By a weakly localizing invariant, we mean a (Morita-invariant) functor 

$$
    F: dgCat_\bbC \lra \cC
$$
to a stable $\infty$-category $\cC$, which %sends Morita-equivalences of dg-categories to equivalences in $\cC$ and 
sends exact triangles of triangulated dg-categories to fiber sequences in $\cC$. Note that some authors call such functors "localizing"; in this paper, this term will be reserved for functors that also commute with filtered colimits ($\HP$ and $\HC^-$ are not localizing in this strong sense).

\subsubsection*{Acknowledgements}

I would like to thank Chris Brav, my advisor, for sharing his ideas and a lot of helpful discussions. I also would like to thank D. Kaledin and A. Prikhodko for many fruitful conversations we had while I was working on this paper, and S. Gorchinskiy for his attention to my Master's thesis where a special case of the main result was proved.

%\newpage

\ \ 

\section{Realization functors}

In this section, we introduce two ways of assigning the so-called topological realization spectrum to a spectral presheaf on the category of affine schemes of finite type over $\bbC$ and prove that, in fact, these two procedures produce the same spectrum. The realization formalism will later be used to define semi-topological K-theory of dg-categories following A. Blanc \cite{Bla}. Each of the two procedures has its own advantages, which will be used throughout the paper.

The results of this section are not new, they appear in a similar form in \cite{AH} with spaces instead of spectra as a target category. Besides introducing notation, this chapter is intended to clarify the proof of the comparison theorem (see also Remark~\ref{stupidKan}).

\ \

We denote by $\AffC\subset\SchC$ the category of affine schemes of finite type and the category of separated schemes of finite type over the field of complex numbers and by $\AffCsm\subset\SchCsm$ the corresponding full subcategories of smooth schemes. There is a standard functorial way to define a topology on the set of complex points of a separated scheme $X$ of finite type over $\bbC$. We denote by $X^\an$ this space and by $X^\han$ the corresponding homotopy type. So we have the functors

$$
\xymatrix{
\SchC \ar[r]^{(\cdot)^\han} \ar[d]^{(\cdot)^\an}& \Spc  \ar[r]^{\Sigma^\infty(\cdot)_+}& \Sp \\
\rTop_\cl ,
} 
$$
where we denote by $\Spc$ the $\infty$-category of homotopy types, by $\Sp$ the $\infty$-category of spectra and by $\rTop_\cl$ the 1-category of %nice
Hausdorff compactly generated topological spaces, where we use the subscript $_\cl$ to emphasize that the functor $(\cdot)^\an$ takes values in honest topological spaces instead of homotopy types. Note that this functor commutes with gluing of schemes, so $(\cdot)^\an$ is determined by its restriction to the subcategory of affine schemes. By abuse of notation, we will use $(\cdot)^\an$ and $(\cdot)^\han$ to denote the corresponding restrictions.

\ \ 

Now we want to extend the functor $(\cdot)^\han$
to simplicial 
presheaves on $\AffC$. The first, straightforward way to do this is via left Kan extension along the Yoneda embedding:

$$
\xymatrix{
\AffC \ar[r]^{(\cdot)^\han} \ar[d]^{Y}& \Spc  \ar[r]^{\Sigma^\infty(\cdot)_+}& \Sp \\
\Pr_{\Spc}(\AffC) \ar[ur]_{%\Lan_Y
Y^*(\cdot)^\han} , \\
} 
$$

The value of $Y^*(\cdot)^\han$ on a presheaf F can be expressed via the usual colimit formula for left Kan extensions:

\[
    Y^*(F)^\han \simeq \colim_{Y(X) \ra F \in Y/F} X^\han .
\]

\ \ 

The functors $Y^*(\cdot)^\han$ and $\Sigma^\infty(\cdot)_+$ commute with colimits, so by the universal property of stabilization (\cite{LurHA}, Corollary 1.4.4.5), we define $Re$:

$$
\xymatrix{
\AffC \ar[r]^{(\cdot)^\han} \ar[d]_{Y}& \Spc  \ar[r]^{\Sigma^\infty(\cdot)_+}& \Sp \\
\Pr_{\Spc}(\AffC) \ar[ur]_{Y^*(\cdot)^\han} \ar[d]   \\
\Pr_{\Sp}{\AffC} \ar[uurr]_{Re} , 
} 
$$
    using that the stabilization of $\Pr_{\Spc}(\AffC)$ is the stable $\infty$-category $\Sp(\Pr_{\Spc}(\AffC)) \simeq \Pr_{\Sp}{\AffC}$ (\cite{LurHA}, Remark 1.4.2.9). The functor $Re$ %commutes with tensoring with a constant presheaf: $Re (F \otimes \ul{E}) = ReF \otimes E$. In particular, it commutes with shifts. It 
    has a right adjoint given by $E \mapsto \{ X \mapsto 
    \ul{\Hom}(\Sigma^\infty_+X^\han, E)   \}$.

\begin{remar} \label{stupidKan}

If instead of doing this two-step procedure, we take simply the left Kan extension of $\Sigma^\infty(\cdot)^\han_+$ to $\Pr_{\Sp}{\AffC}$, the resulting functor will not commute with the loops functor and its value on a presheaf $F$ will actually coincide with its value on the connective cover $\tilde F$ of $F$:

$$
    \Lan_{\Sigma^\infty(Y)_+} (\Sigma^\infty(\cdot)^\han_+) (F) \simeq \colim_{Y(X) \ra F} (\Sigma^\infty(X)^\han_+) \simeq \colim_{Y(X) \ra \tilde F} (\Sigma^\infty(X)^\han_+) \simeq \Lan_{\Sigma^\infty(Y)_+} (\Sigma^\infty(\cdot)^\han_+) (\tilde F) .
$$

\end{remar}

\ \ 

\begin{remar}

Note that we could do the same procedure, starting from the category $\AffC^\sm$. We denote the resulting functor by $Re^\sm$.

\end{remar}

Now we consider the other way to extend the analytification functor to $\Pr_{\Sp}{\AffC}$.

Take some $F \in \Pr_{\Sp}{\AffC}$. We can construct its left Kan extension along the complex points functor: 

$$
\xymatrix{
\AffC^\op \ar[r]^{F} \ar[d]^{(\cdot)^\an}& \Sp   \\
\rTop_\cl^\op \ar[ur]_{\an^*F} ,
} 
$$
and then take its value on the standard cosimplicial object $\Delta^\bullet_\tp: \Delta \lra \rTop_\cl$. Now we define $Re' F$ to be the realization of the resulting simplicial spectrum: $Re' F : = |(\an^*F)(\Delta^\bullet_\tp)|$. So we obtain a functor $Re'$ as the following composition:

%$$
 \[   Re' : \Pr{}\!_{\Sp}(\AffC) \xrightarrow{an^*} \Pr\!_{\Sp}{\rTop_\cl} \xrightarrow{(\Delta^\bullet_\tp)^*} \Pr\!_{\Sp}(\Delta) \xrightarrow{|\cdot|} \Sp . \] 
%$$

This construction was considered in \cite{FW01} 
and was used to define semi-topological K-theory for quasi-projective varieties (see also \cite{FW05}).

\ \ 

Now we prove a version of Antieau-Heller Theorem 2.3 \cite{AH}:

\begin{theo}\label{Recomparison}

    There is a natural equivalence between the functors $Re$ and $Re'$.

\end{theo}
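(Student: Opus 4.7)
The plan is to exploit the fact that both $Re$ and $Re'$ are colimit-preserving functors out of $\Pr_{\Sp}(\AffC)$, and that this stable presentable $\infty$-category is generated under colimits by the representables $\Sigma^\infty Y(X)_+$, $X \in \AffC$. Thus producing a natural equivalence $Re \simeq Re'$ reduces to producing a natural equivalence between their restrictions along $X \mapsto \Sigma^\infty Y(X)_+$, after which the universal property of $\Pr_{\Sp}(\AffC) \simeq \Sp(\Pr_{\Spc}(\AffC))$ identifies the two as the unique colimit-preserving extensions of a common functor $\AffC \to \Sp$.

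First I would verify colimit-preservation for $Re'$; for $Re$ it is immediate from its construction as a left adjoint. The three constituent functors of $Re'$ each preserve colimits: $\an^*$ as a left Kan extension between presentable $\infty$-categories, restriction along $\Delta^\bullet_\tp$ as a functor between functor categories where colimits are computed pointwise, and geometric realization of simplicial spectra as a left adjoint.

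Next I would compute $Re'$ on a representable. By the universal property of left Kan extension, for any $G \in \Pr_{\Sp}(\rTop_\cl)$ we have
\[
    \Hom(\an^*(\Sigma^\infty Y(X)_+), G) \simeq \Hom(\Sigma^\infty Y(X)_+, G \circ ((\cdot)^\an)^\op) \simeq G(X^\an),
\]
so by Yoneda $\an^*(\Sigma^\infty Y(X)_+)$ is the representable spectral presheaf on $\rTop_\cl$ at $X^\an$, namely $Z \mapsto \Sigma^\infty \Hom_{\rTop_\cl}(Z, X^\an)_+$. Restricting along $\Delta^\bullet_\tp$ gives the simplicial spectrum $[n] \mapsto \Sigma^\infty \Sing(X^\an)_n{}_+$, whose realization is $\Sigma^\infty |\Sing(X^\an)|_+$ because $\Sigma^\infty(\cdot)_+$ commutes with colimits. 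Since $X^\an$ has the homotopy type of a CW complex for $X \in \AffC$, the canonical map $|\Sing(X^\an)| \to X^\an$ is a weak equivalence, yielding $Re'(\Sigma^\infty Y(X)_+) \simeq \Sigma^\infty X^\han_+$ naturally in $X$, which matches $Re(\Sigma^\infty Y(X)_+)$ by the very construction of $Re$.

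The main subtlety is not the pointwise identification but assembling these equivalences into a single natural equivalence of colimit-preserving functors $\Pr_{\Sp}(\AffC) \to \Sp$; this is handled by the universal property step from the first paragraph, once both sides are recognised as colimit-preserving extensions of the same functor $X \mapsto \Sigma^\infty X^\han_+$. A small bookkeeping point worth checking carefully is that the restriction along $\Delta^\bullet_\tp$ needs no sifted-colimit hypothesis thanks to the spectral enhancement: we are not left Kan extending a space-valued presheaf (as in Remark~\ref{stupidKan}) but working with $\Sp$-valued presheaves from the outset, which avoids the connective-cover pathology.
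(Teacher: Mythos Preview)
Your proposal is correct and follows essentially the same approach as the paper: reduce to representables using that both $Re$ and $Re'$ preserve colimits, then compute $Re'$ on a representable by identifying $\an^*$ of a representable via Yoneda and realizing the resulting singular simplicial spectrum to obtain $\Sigma^\infty X^\han_+$. You are somewhat more explicit than the paper about why $Re'$ preserves colimits and about assembling the pointwise equivalences into a natural one, but the argument is the same.
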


\begin{proof}

To prove that $Re \simeq Re'$, note 
that both functors commute with colimits and shifts, hence it is sufficient to provide an equivalence on their restrictions to the subcategory of representable presheaves.

For $X\in \AffC$ we want to compute $Re'( \AffC (\cdot, X) )$. 
Since $\Sigma^\infty_+$ preserves left Kan extensions, by applying Yoneda lemma twice:  

\[  \forall H\in \Pr{}\!_{Set}(\rTop_{cl}) \ \ \Pr{}\!_{Set}(\AffC) (\AffC (\cdot, X) , H(\cdot)^\an) \simeq H(X^\an) \simeq \Pr{}\!_{Set}(\rTop_{cl})(\rTop_{cl}(\cdot, X^\an), H)
    ,
\]

and using the adjunction between left Kan extension and precomposition, we deduce

$$
    (an^*( \AffC (\cdot, X) ))(M) \simeq \Sigma^\infty( \rTop_{cl}(M, X^\an) )_+ .
$$

and, by functoriality, we get

$Re'( \AffC (\cdot, X) ) \simeq |[n] \mapsto \Sigma^\infty( Sing^n(X^\an) )_+ | \simeq \Sigma^\infty(X^\han)_+ \simeq Re(\AffC (\cdot, X) )$ and $Re'\simeq Re$.

\end{proof}

Unravelling the definitions from the next section% together with %Theorem~ref
, we get the following corollary.

\begin{coro} \label{Kstformula}

There are natural isomorphisms:

$$
    \rK^\st (T) \simeq | [n] \mapsto  \colim_{\Delta^n_\tp \ra (\Spec R)^\an \in (\cdot)^\an/\Delta^n_\tp}\rK(T\otimes_\bbC R)| ,
$$

$$
        \tilde\rK^\st(T) \simeq | [n] \mapsto  \colim_{\Delta^n_\tp \ra (\Spec R)^\an \in (\cdot)^\an/\Delta^n_\tp}\tilde\rK(T\otimes_\bbC R)| .%\simeq 
$$

\end{coro}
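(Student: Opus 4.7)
The plan is simply to unravel definitions. In the next section the semi-topological K-theory $\rK^\st(T)$ will be defined as $Re(F_T)$, where $F_T \in \Pr_{\Sp}(\AffC)$ is the spectral presheaf $\Spec R \mapsto \rK(T\otimes_\bbC R)$, and similarly $\tilde\rK^\st(T) = Re(\tilde F_T)$ with $\tilde F_T$ built from the variant $\tilde\rK$. Given these definitions, the corollary is just a rewriting of $Re$ using the alternative model $Re'$ provided by Theorem~\ref{Recomparison}.

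First I would apply Theorem~\ref{Recomparison} to replace $Re(F_T)$ by
$$Re'(F_T) \;\simeq\; \bigl|[n] \mapsto (\an^*F_T)(\Delta^n_\tp)\bigr|,$$
so it suffices to identify the spectrum $(\an^*F_T)(\Delta^n_\tp)$ with the inner colimit appearing in the statement. Then I would invoke the standard pointwise formula for the left Kan extension $\an^*F_T = \Lan_{(\cdot)^\an}F_T$ along $(\cdot)^\an: \AffC^\op \to \rTop_\cl^\op$: for any space $M$,
$$(\an^*F_T)(M) \;\simeq\; \colim_{(\Spec R,\, M \to (\Spec R)^\an)} F_T(\Spec R),$$
the colimit running over the slice $(\cdot)^\an/M$ whose objects are pairs consisting of $\Spec R \in \AffC$ and a continuous map $M \to (\Spec R)^\an$. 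Specializing $M = \Delta^n_\tp$ and $F_T(\Spec R) = \rK(T\otimes_\bbC R)$ yields precisely the first formula in the corollary; the same argument with $\tilde\rK$ in place of $\rK$ gives the second.

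There is no real obstacle here, since the statement is explicitly framed as unravelling definitions. The only minor point worth checking is the direction of arrows in the slice category: because the Kan extension is taken along the functor $(\cdot)^\an: \AffC^\op \to \rTop_\cl^\op$, a $\bbC$-algebra map $R \to S$ contributes a morphism in the slice covering $\Delta^n_\tp \to (\Spec R)^\an$ and $\Delta^n_\tp \to (\Spec S)^\an$ via the induced $(\Spec S)^\an \to (\Spec R)^\an$, which matches the formula in the statement.
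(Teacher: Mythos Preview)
Your proposal is correct and matches the paper's own approach: the paper simply states that the corollary follows by ``unravelling the definitions from the next section'' together with Theorem~\ref{Recomparison}, which is exactly what you have spelled out. Your added remark about the direction of arrows in the slice category is a helpful sanity check but not strictly needed.
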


These formulae have the advantage as being colimits \it of \rm K-theory spectra instead of having colimits %sort of
 \it over \rm some K-theory functor.
 
 \ \ 

\section{K-theories of dg-categories and topological Chern character}

In this section we give a brief survey of results in \cite{Bla}.

\ \ 

Consider the functors

$$
    \rK(T\otimes_\bbC \cdot ), \  \tilde\rK(T\otimes_\bbC \cdot): \AffC^\op \lra \Sp ,
$$
assigning to an affine scheme $S = \Spec R$ the spectra of algebraic K-theory and connective algebraic K-theory of the dg-category $T\otimes_\bbC R$ respectively. Note, that, due to Morita-invariance of K-theory, it essentially does not matter if one replaces the dg-category $T\otimes_\bbC R$ by $T\otimes_\bbC \Perf(S)$.

Using topological realization, one defines semi-topological K-theory:

\begin{defi} (\cite{Bla}, Definition 4.1)
The nonconnective/connective \it semi-topological K-theory \rm of a $\bbC$-dg-category $T$ is the spectrum $\rK^\st(T):= Re(\rK(T\otimes_\bbC\cdot))$ / $\tilde\rK^\st(T):= Re(\tilde\rK(T\otimes_\bbC\cdot))$. By monoidality of Re (\cite{Bla}, Proposition 3.12%, discussed in Appendix C
), %it
$\rK^\st$ can be considered here as a functor from $\rK(\cdot)$-modules to $\rK^\st(\bbC)$-modules and analogously for the connective version.

\end{defi}

The coming 
definition of topological K-theory is based on the following calculation.

\begin{theo} \label{Kstpoint} (\cite{Bla}, Theorems 4.5, 4.6)
There are canonical equivalences of ring spectra $\rK^\st(\bbC) \simeq \tilde\rK^\st(\bbC) \simeq ku$, where $ku$ is the connective cover of the topological K-theory spectrum $KU$.

\end{theo}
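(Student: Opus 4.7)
The plan is to use the $Re'$-formula from Corollary~\ref{Kstformula} and to construct an explicit comparison map by analytification, reducing the problem to the Friedlander--Walker style identification of semi-topological K-theory with topological K-theory at a point. First I would rewrite
\[
\tilde\rK^\st(\bbC) \simeq \bigl|\,[n] \mapsto \colim_{\Delta^n_\tp \to (\Spec R)^\an}\tilde\rK(R)\,\bigr|,
\]
and produce a map to $ku$ as follows. Given $(R, \Delta^n_\tp \to (\Spec R)^\an)$ and a perfect $R$-module $P$, analytification produces a topological vector bundle on $(\Spec R)^\an$, hence a topological vector bundle on $\Delta^n_\tp$. Since $\Delta^n_\tp$ is contractible, this gives a canonical map of spectra $\tilde\rK(R) \to ku$, natural in the indexing pair, and stabilising under the simplicial structure to yield $\tilde\rK^\st(\bbC) \to |[n]\mapsto ku| \simeq ku$.

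To show this is an equivalence I would argue at the level of homotopy groups, using Grauert's Oka principle to replace analytic bundles by topological bundles on Stein manifolds $(\Spec R)^\an$ with $R$ smooth. Concretely, an element of $\pi_k(\tilde\rK^\st(\bbC))$ is represented by a continuous simplicial family of algebraic vector bundles over varying smooth affines; under analytification this becomes a topological class on a simplex, and the resulting comparison is the statement (due to Friedlander--Walker, and reproven in the relevant form in \cite{Bla}) that the semi-topological K-theory of a smooth projective $\bbC$-variety $X$ computes $ku^{X^\an}$. For $X=\Spec\bbC$, $X^\an=\pt$ so $ku^{\pt}=ku$.

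For the non-connective version, the cofiber of $\tilde\rK^\st(\bbC) \to \rK^\st(\bbC)$ comes from the negative K-theory presheaf $R\mapsto\rK(R)/\tilde\rK(R)$. I would argue that after applying $Re$ this vanishes: by Weibel's theorem negative K-theory vanishes on regular Noetherian rings, so $\tilde\rK\to\rK$ is an equivalence on $\AffCsm$; comparing $Re$ and $Re^\sm$ (which agree on a sufficiently well-behaved presheaf, via resolution of singularities or smooth hyperdescent for K-theory) reduces the non-connective statement to the connective one. I expect the main obstacle to be Step~2, the identification of the realisation with topological K-theory, which requires the Oka--Grauert comparison together with a careful colimit calculation of the simplicial object $[n]\mapsto\colim_{\Delta^n_\tp\to(\Spec R)^\an}\tilde\rK(R)$; everything else is essentially formal manipulation once that input is available.
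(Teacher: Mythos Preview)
Your treatment of the first equivalence $\rK^\st(\bbC)\simeq\tilde\rK^\st(\bbC)$ is exactly what the paper does: the presheaves $\rK$ and $\tilde\rK$ agree on smooth affines since negative algebraic K-theory vanishes on regular Noetherian rings, and one then invokes the comparison theorem $Re(F)\simeq Re^\sm(l^*F)$ (stated in the paper immediately after Theorem~\ref{Kstpoint}) to conclude. No issues there.

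For the second equivalence $\tilde\rK^\st(\bbC)\simeq ku$, there is a genuine gap. You justify the comparison map being an equivalence by appealing to ``the statement (due to Friedlander--Walker\dots) that the semi-topological K-theory of a smooth projective $\bbC$-variety $X$ computes $ku^{X^\an}$.'' That statement is not a theorem: what Friedlander--Walker and Blanc prove is that $\rK^\tp(X)\simeq KU^{X^\an}$, i.e.\ the comparison becomes an equivalence only \emph{after inverting the Bott element}. Before inverting Bott, $\rK^\st(X)$ and $ku^{X^\an}$ differ in general. Specialising your claimed general statement to $X=\Spec\bbC$ is precisely the assertion $\tilde\rK^\st(\bbC)\simeq ku$ that you are trying to prove, so the argument is circular as written. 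The Oka--Grauert input you mention helps with the construction of the map but does not by itself force the homotopy groups to agree.

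The paper (following Blanc) bypasses this by a direct computation with the Grassmannian model: the connective K-theory presheaf on $\AffC$ is the group completion of the presheaf ind-represented by $\Gr=\coprod_{n\ge 0}\Gr(n,\infty)$; since $Re$ commutes with colimits and group completion, $\tilde\rK^\st(\bbC)$ is the connective spectrum associated to the group completion of $\Gr^\an=\coprod_n BU(n)$, which is $BU\times\bbZ$, i.e.\ $ku$. This is what is meant by ``direct computation'' in the paragraph after Theorem~\ref{Kstpoint} and is spelled out in the subsequent remark. Reworking your argument along these lines would close the gap.
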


Here the second equivalence is due to a direct computation and the first one comes from vanishing of negative algebraic K-theory for regular affine schemes and the following theorem:

\begin{theo} (\cite{FW03}, Corollary 2.7 or \cite{Bla}, Theorem 3.18)  
For any spectral presheaf $F\in \Pr{}\!_{\Sp}(\AffC)$, the functor

$$
    l^*: \Pr{}\!_{\Sp}(\AffC) \lra \Pr{}\!_{\Sp}(\AffC^\sm) ,
$$

which restricts a given presheaf to the subcategory of affine smooth schemes, produces an equivalence: $Re^\sm(l^*F) \simeq Re(F)$.

\end{theo}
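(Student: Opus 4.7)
The plan is to reduce to the case of representable presheaves and then invoke a smooth affine simplicial hyperresolution of the target scheme.

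Both $Re$ and $Re^\sm$ commute with colimits (being stabilizations of left Kan extensions), and the restriction functor $l^*$ also commutes with colimits. Writing every spectral presheaf as a colimit of shifted representables in $\Pr_{\Sp}(\AffC)$, it suffices to verify $Re^\sm(l^*F) \simeq Re(F)$ when $F = Y(X)$ is representable by some $X \in \AffC$. By the computation at the end of the proof of Theorem~\ref{Recomparison}, one has $Re(Y(X)) \simeq \Sigma^\infty_+ X^\han$. On the other hand $l^*Y(X) = \AffCsm(-,X)$ is set-valued, and a routine unwinding of the definitions gives
$$
Re^\sm(\AffCsm(-,X)) \simeq \Sigma^\infty_+ \colim_{(Y,f)\in \AffCsm/X} Y^\han .
$$
The theorem thus reduces to the statement that analytification $(\cdot)^\han: \AffC \to \Spc$ is the left Kan extension of its restriction to $\AffCsm$, equivalently that $\colim_{(Y,f)\in \AffCsm/X} Y^\han \simeq X^\han$ for every $X \in \AffC$.

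To prove this equivalence, I would construct a smooth affine simplicial hyperresolution $X_\bullet \to X$ with each $X_n \in \AffCsm$, obtained by iteratively applying Hironaka's resolution of singularities (to make each layer smooth) and Jouanolou's trick (to make each layer affine), starting from the \v{C}ech nerve of a resolution $\tilde X \to X$ and proceeding inductively on fibre products. Proper descent for analytification then yields $|X_\bullet^\han| \simeq X^\han$. The resulting functor $\Delta^\op \to \AffCsm/X$ can be verified to be cofinal in the $\infty$-categorical sense: given any $(Y,f) \in \AffCsm/X$, the comma category of factorizations $Y \to X_n$ over $X$ is weakly contractible, using that Hironaka and Jouanolou applied to the fibre products $Y \times_X X_n$ exhibit such factorizations through smooth affine dominations of $Y$. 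Cofinality then identifies $\colim_{(Y,f)} Y^\han$ with $|X_\bullet^\han| \simeq X^\han$, completing the reduction.

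The main obstacle is the construction and cofinality of the hyperresolution. Hironaka's theorem produces smooth but typically only quasi-projective schemes, so Jouanolou's trick must be applied at every simplicial level, and the resulting diagram must be chosen sufficiently functorially --- in particular, compatibly with the fibre products $Y \times_X X_n$ --- to ensure the cofinality property. An alternative, perhaps cleaner route that avoids hyperresolutions altogether would proceed via direct cofinality in the $Re'$-description of Theorem~\ref{Recomparison}: reducing to showing that for every continuous $g: \Delta^n_\tp \to X^\an$ the indexing category of smooth affine liftings $(Y, f, \phi)$ with $f^\an \circ \phi = g$ is weakly contractible, a claim again settled by Hironaka combined with Jouanolou and the contractibility of $\Delta^n_\tp$.
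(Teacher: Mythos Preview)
The paper does not supply its own proof of this theorem; it is quoted from \cite{FW03} and \cite{Bla}, so there is nothing here to compare against and I evaluate your proposal on its own terms. Your reduction to representables is correct, as is the identification of the target claim $\colim_{(Y,f) \in \AffCsm/X} Y^{\han} \simeq X^{\han}$.

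The gap is the cofinality step. For $[n] \mapsto X_n$ to be cofinal as a functor $\Delta^{\op} \to \AffCsm/X$, one needs, for every smooth affine $f : Y \to X$, the comma category of algebraic factorizations $Y \to X_n$ \emph{over $X$} to be weakly contractible. Your justification --- resolve the fibre products $Y \times_X X_n$ --- produces \emph{new} smooth affines $W_n$ mapping to $X_n$, not factorizations of the fixed $Y$ through $X_n$; it populates the wrong comma category. In general there is no reason a given smooth affine $Y \to X$ lifts along a chosen resolution $\tilde X \to X$, nor, when lifts exist, that the resulting category of lifts through the simplicial object is contractible. You rightly flag this as ``the main obstacle'', but the sketch does not overcome it. The proofs in the cited references also rest on resolution of singularities, but exploit it through \emph{descent} rather than cofinality: one shows that realization sends cdh (in particular abstract blow-up) squares to pushout squares --- ultimately because analytification does --- and then argues by induction on dimension, or equivalently factors through a motivic category in which smooth generation is already established. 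Your alternative route via the $Re'$-model proposes yet another cofinality statement (weak contractibility of smooth affine liftings of a fixed $g : \Delta^n_\tp \to X^{\an}$), which is again nontrivial and no easier than the claim it is meant to replace.
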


\begin{remar}

Due to Theorem~\ref{Recomparison}, for $X$ a projective weakly normal variety, $\rK^\st(X)$ is equivalent to $\cK^{semi}(X)$  
of Friedlander-Walker, which was defined in terms of analytification of the ind-scheme  
$Mor(X, Gr)$. In particular, if $X = \Spec\bbC$, $\cK^{semi}(\bbC)$ is the connective spectrum corresponding to the group-completion of \[ Gr^\an = \bigsqcup_{n\ge 0}Gr(n, \infty)^\an, \] which is $BU\times \bbZ$, so the statement of Theorem~\ref{Kstpoint} (\cite{Bla}, Theorems 4.5) follows. And similarly for the statement of Proposition~\ref{lcsch} (\cite{Bla}, Theorems 4.5) below.

\end{remar}

\ \ 

Note that $KU \simeq ku[\beta^{-1}] \simeq \rK^\st(\bbC)[\beta^{-1}]$ for an appropriate choice of the Bott generator $\beta$ in $\pi_2(\rK^\st(\bbC))$. Now define topological K-theory

\begin{defi} (\cite{Bla}, Definition 4.13)
The \it topological K-theory \rm of a $\bbC$-dg-category $T$ is the $KU$-module $\rK^\st(T)[\beta^{-1}]$.

\end{defi}

This definition is compatible with the classical one:

\begin{prop} \label{lcsch} (\cite{Bla}, Proposition 4.32)
Let $X$ be a separated $\bbC$-scheme of finite type. Then there exists a canonical isomorphism: $\rK^\tp(Perf(X)) \simeq KU(X^\han)$.

\end{prop}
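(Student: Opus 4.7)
The plan is to construct a natural, $ku$-linear comparison map $\rK^\st(\Perf(X))\to KU(X^\han)$, invert the Bott element $\beta$, and verify the resulting morphism $\rK^\tp(\Perf(X))\to KU(X^\han)$ is an equivalence by descending from the smooth projective case, where the statement reduces to a classical Grassmannian mapping-space identification.

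For the construction, for every $R\in\AffC$ the passage from algebraic to topological complex vector bundles provides a natural transformation of spectral presheaves
$$\rK(\Perf(X)\otimes_\bbC R)\simeq \rK(X\times_\bbC \Spec R)\lra KU\bigl(X^\han\times (\Spec R)^\han\bigr).$$
Applying $Re$ and using the $Re'$ description from Theorem~\ref{Recomparison} together with the homotopy invariance of topological K-theory, I would compute
$$Re\bigl(KU(X^\han\times(\cdot)^\an)\bigr)\simeq \bigl|[n]\mapsto KU(X^\han\times\Delta^n_\tp)\bigr|\simeq KU(X^\han),$$
which yields the desired map on $\rK^\st$; inverting $\beta$ then produces the candidate comparison on $\rK^\tp$.

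Next I would reduce to the case of $X$ smooth projective. Both sides satisfy suitable descent in $X$: $KU((\cdot)^\han)$ via the Mayer--Vietoris property of topological K-theory and analytification of blow-up squares, while $\rK^\st(\Perf(\cdot))$ inherits Nisnevich and cdh descent from algebraic K-theory via the colimit formula of Corollary~\ref{Kstformula}, since $Re$ commutes with colimits and hence preserves the relevant fiber sequences. Combining Hironaka resolution of singularities with cdh blow-up squares then reduces the verification to the case when $X$ is smooth projective.

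Finally, for smooth projective $X$ I would invoke the Friedlander--Walker identification highlighted in the remark preceding the proposition: $\rK^\st(\Perf(X))$ agrees with $\cK^{semi}(X)$, the spectrum obtained by group-completing $\bigsqcup_n\mathrm{Mor}(X,\Gr(n,\infty))^\an$. Forgetting the algebraicity of morphisms gives a natural map to $\mathrm{Map}(X^\han, BU\times\bbZ)$, and the content of Friedlander--Walker is that this becomes an equivalence of $KU$-modules after group completion and inverting $\beta$, giving $\rK^\tp(\Perf(X))\simeq KU(X^\han)$. The main obstacle is precisely this last step: showing that stable algebraic maps into the Grassmannian approximate continuous maps modulo Bott periodicity is the technical heart of Friedlander--Walker and of Blanc's Proposition 4.32, while the earlier descent reduction is comparatively routine once both invariants are known to accept the same squares.
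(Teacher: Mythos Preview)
The paper does not give its own proof of this proposition: it is stated with attribution to \cite{Bla}, Proposition~4.32, and the only commentary in the paper is the preceding remark observing that for projective weakly normal $X$ one has $\rK^\st(X)\simeq \cK^{semi}(X)$ via the Friedlander--Walker Grassmannian model, ``and similarly for the statement of Proposition~\ref{lcsch}''. So there is nothing to compare your argument against except Blanc's original proof.

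Your outline is broadly in line with how Blanc actually proceeds: construct a comparison map, use Friedlander--Walker's identification of semi-topological K-theory with the Grassmannian mapping space for a suitable class of varieties, and then invoke their theorem that Bott-inverted $\cK^{semi}$ agrees with $KU$; finally propagate to arbitrary finite-type $X$ by cdh/blow-up descent (in characteristic zero algebraic K-theory satisfies cdh descent by Haesemeyer, and this survives $Re$ since fiber sequences are cofiber sequences in a stable setting). One technical point to be careful about: your computation $Re\bigl(KU(X^\han\times(\cdot)^\an)\bigr)\simeq |[n]\mapsto KU(X^\han\times\Delta^n_\tp)|$ is not an immediate consequence of the $Re'$ formula, because $Re'$ involves the left Kan extension $\an^*F$ evaluated at $\Delta^n_\tp$, i.e.\ a colimit over $\Delta^n_\tp\to(\Spec R)^\an$, not simply evaluation of the presheaf at $\Delta^n_\tp$. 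Blanc handles the construction of the comparison map differently (via the right adjoint to $Re$), so if you want to keep your version you should justify that step. Otherwise your sketch correctly identifies the Friedlander--Walker input as the substantive content and the descent reduction as routine.
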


\ \ 

So, at least in the case of decent schemes, (semi-)topological K-theory, unlike algebraic K-theory, tends to have not very large 
homotopy groups. There is a topological Chern character {$\Ch^\tp:~\rK^\tp \ra \HP$}, 
and $\rK^\tp(T)\otimes \bbQ$ can be considered as a candidate for the role of a rational structure in the periodic cyclic homology $\HP(T)$. 

\ \ 

By establishing compatibility with the classical topological Chern character $\Ch_{utop}: KU(X^\han) \ra \rH\bbC[u^{\pm 1}](X^\han)$, Blanc proves the lattice conjecture in the case of schemes (\cite{Bla}, Proposition 4.32).

Blanc also considered the simplest noncommutative case, the case of finite-dimensional dg-algebras, in which the lattice conjecture was proved after replacing $\rK^\tp$ by $\tilde\rK^\tp$, which in general has worse properties (in particular, there is no simple reason for $\tilde\rK^\tp$ to be a localizing invariant). In our Theorem~\ref{lccnprop} and Proposition~\ref{lcfddg}, we consider two different generalizations of the aforementioned case and prove the lattice conjecture for each of them. In particular, it follows that there is no need for replacing $\rK^\tp$ by $\tilde\rK^\tp$ in \cite{Bla}, Proposition 4.37, either by dimension comparison or by 
our Proposition~\ref{Kstneg}, whcih directly shows that the natural map $\tilde\rK^\tp \ra \rK^\tp$ is an equivalence in this case.

\ \ 

\section{Topological realization of Hochschild-type invariants}

In this section, we consider behaviour of the topological realization applied to Hochschild-type invariants. The main statement here is that the topological realization of negative cyclic homology coincides with the topological realization of periodic cyclic homology, thus is nil-invariant in the sense that we will make precise in the next section.

\ \ 

From now on, $k$ is any subfield of $\bbC$, so one can consider $\bbC$-dg-categories as $k$-dg-categories and define Hochschild-type invariants in the $k$-linear setting. %May be [BGT] as reference for definitions
By forgetting the $H\mathbb{Z}$-module structure, we consider these invariants as functors to the category of spectra.

We start by 
computing the topological realization of the relative version of Hochschild homology.

\begin{prop}\label{ReHH}

Let $T$ be a 
$\bbC$-dg-category. Then the topological realization of the functor

$$
    \HH(T\otimes_\bbC \cdot /k): \AffC^\op \lra \Sp
$$

$$
    R \mapsto \HH(T\otimes_\bbC R /k ) %= \HH(T\otimes_\bbC R | k)
$$

is trivial.

\end{prop}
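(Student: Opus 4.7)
My plan is to reduce the claim, via base change and HKR, to a vanishing statement for the topological realization of algebraic differential forms, and then to trivialize that via an analytic contraction.

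First, I would like to factor out the $T$-dependence using a base-change formula for Hochschild homology. For $T$ a $\bbC$-dg-category and $R$ a commutative $\bbC$-algebra, the formula
$$
\HH(T\otimes_\bbC R/k)\simeq \HH(T/k)\otimes^{\bbL}_{\HH(\bbC/k)}\HH(R/k)
$$
should hold functorially in $R$ (it follows, for $E_\infty$-algebras, from $\HH(A/k)\simeq A\otimes_k S^1$ and the fact that $T\otimes_\bbC R$ is a pushout $T\otimes^{\bbL}_\bbC R$). Combined with the monoidality of $Re$ (\cite{Bla}, Proposition 3.12) and the fact that $Re$ preserves colimits, one obtains
$$
Re\bigl(\HH(T\otimes_\bbC \cdot/k)\bigr)\simeq\HH(T/k)\otimes^{\bbL}_{\HH(\bbC/k)}Re\bigl(\HH(\cdot/k)\bigr),
$$
so it suffices to prove $Re(\HH(\cdot/k))\simeq 0$.

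Next, I would invoke the equivalence $Re\simeq Re^\sm$ quoted from \cite{Bla}, Theorem 3.18 to restrict attention to smooth affine $\bbC$-schemes. On this subcategory, the Hochschild-Kostant-Rosenberg theorem gives a natural equivalence $\HH(R/k)\simeq\bigoplus_i\Omega^i_{R/k}[i]$, which further reduces the problem to showing $Re^\sm(\Omega^i_{\cdot/k})\simeq 0$ for every $i\geq 0$.

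The main obstacle is this final vanishing. Using the $Re'$ presentation from Corollary~\ref{Kstformula},
$$
Re^\sm(\Omega^i_{\cdot/k})\simeq \Bigl|\,[n]\mapsto\colim_{\Delta^n_\tp\to X^\an,\, X\in\AffCsm}\Omega^i_{\cO(X)/k}\,\Bigr|,
$$
so the goal is to produce a contracting null-homotopy of this simplicial spectrum. The natural mechanism is an analytic Poincar\'e-type argument: every continuous simplex $\Delta^n_\tp\to X^\an$ factors through a contractible Stein neighborhood in $X^\an$, on which algebraic differentials become exact and admit explicit primitives/integration operators. Transporting this analytic contracting structure across the left Kan extension $\an^*$ in a way that is compatible with the face and degeneracy maps, and organizing it functorially in the pairs $(X,\phi)$, is where I expect essentially all of the technical work to take place.
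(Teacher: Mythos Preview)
Your reduction to $Re(\HH(\cdot/k))\simeq 0$ is correct in spirit, and so is the HKR step, but the final ``analytic Poincar\'e'' argument is a genuine gap. The Poincar\'e lemma is a statement about the de~Rham \emph{complex}: it provides local primitives witnessing that $d\colon\Omega^{i-1}\to\Omega^i$ is surjective onto closed forms, i.e.\ a chain contraction \emph{across} degrees. It says nothing about a single $\Omega^i$ in isolation, so it cannot produce a null-homotopy of the simplicial spectrum $[n]\mapsto\colim\Omega^i$ for fixed $i$. Already for $i=0$ you are asking for the vanishing of $Re(\cO)$, where $\cO(\Spec R)=R$; no integration operator helps here. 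Moreover, factoring a continuous simplex $\Delta^n_\tp\to X^\an$ through a Stein open is an analytic move that does not produce a new object in the \emph{algebraic} indexing category over which the colimit in $Re'$ is taken, so there is no evident way to transport such a contraction back.

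The paper's argument is both shorter and avoids HKR entirely. Since $\HH$ and $Re$ are monoidal, $Re(\HH(\cdot/k))$ is a connective ring spectrum and $Re(\HH(T\otimes_\bbC\cdot/k))$ is a module over it; hence it suffices to show $\pi_0Re(\HH(\cdot/k))=0$. By Blanc's description of $\pi_0Re$ in terms of algebraic equivalence, this amounts to showing that any two elements $a,b\in\HH_0(\bbC/k)\simeq\bbC$ are connected by an algebraic curve, which is immediate via $\bbA^1$ and the section~$t$. If you want to keep your HKR decomposition, the correct endgame is the \emph{same} trick one level down: each $\Omega^i$ is a module over $\Omega^0=\cO$, and $Re(\cO)$ is a connective ring spectrum whose $\pi_0$ vanishes by the identical $\bbA^1$-connectedness argument; hence $Re(\Omega^i)=0$ for all $i$. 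Your base-change formula for $\HH$ is then an unnecessary detour---the bare module structure already gives the reduction.
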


\begin{proof}

We start with noticing that, since  
Hochschild homology is a monoidal functor (this seems to be a folklore statement, see Proposition 2.1 of \cite{AV} for a proof and §4.2.3 of \cite{Lod} for an earlier incarnation of this phenomenon) and topological realization is monoidal (by Proposition 3.12 \cite{Bla}), 
$Re\HH(\cdot/k)$ is a connective ring spectrum and $Re(\HH(T\otimes_\bbC \cdot/k))$ is a $Re\HH(\cdot/k)$-module.
So it suffices to prove that $Re\HH(\cdot/k)$ is trivial.
We show that its $\pi_0$-group is trivial, hence the identity morphisms on $Re\HH(\cdot/k)$ and on $Re(\HH(T\otimes_\bbC \cdot/k))$ are trivial as well.

By 
Proposition 3.10 \cite{Bla}, it suffices to prove that every two elements $a,b\in \HH_0(\bbC/k) \simeq \bbC$ are algebraically equivalent, i.e., for any two morphisms $h_\bbC \rightrightarrows \HH(\cdot/k)$, there exists an algebraic curve $C$ and a commutative diagram

$$
\xymatrix{
h_\bbC \ar[dr]^{a} \ar[d] \\
h_C \ar[r] & \HH(\cdot/k)   \\
h_\bbC \ar[ur]_{b} \ar[u] . 
} 
$$

Take $C = \Spec \bbC[t]$, the morphisms $\Spec \bbC \ra C$ given by the ideals $(t-a)$ and $(t-b)$ and the morphism $h_C \ra \HH(\cdot/k)$ given by $t$. The diagram is commutative by definition, so we are done.

\end{proof}

Now we can deduce the following important statement.

\begin{prop} \label{ReHC}

Let $T$ be a (smooth proper) $\bbC$-dg-category. Then

a) the topological realization of the functor

$$
    \HC(T\otimes_\bbC \cdot/k) : \AffC^\op \lra \Sp
$$

is zero;

b) the topological realization of the morphism of the functors

$$
    \HC^{-}(T\otimes_\bbC \cdot/k) \lra \HP(T\otimes_\bbC \cdot/k): \AffC^\op \lra \Sp
$$

is an
equivalence.

\end{prop}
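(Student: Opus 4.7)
The plan is to deduce both statements directly from Proposition~\ref{ReHH} using standard identifications of cyclic-homology-type invariants as $S^1$-equivariant constructions on $\HH$, together with the fact that $Re$ is colimit-preserving (hence exact) between stable $\infty$-categories.

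For part (a), recall that $\HC(A/k) \simeq \HH(A/k)_{hS^1}$ is the $S^1$-homotopy orbits of Hochschild homology, which by definition is a colimit over $BS^1$ in $\Sp$. By construction, $Re$ is built as a left Kan extension followed by the universal stabilization of a colimit-preserving functor (see \cite{LurHA}, Corollary 1.4.4.5), so $Re$ preserves all small colimits. Hence
\[
    Re\bigl(\HC(T\otimes_\bbC\cdot /k)\bigr) \simeq Re\bigl(\HH(T\otimes_\bbC\cdot/k)\bigr)_{hS^1},
\]
which vanishes by Proposition~\ref{ReHH}.

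For part (b), apply $Re$ to the norm/Tate cofiber sequence of $S^1$-spectra
\[
    \HH(\cdot)_{hS^1}[1] \lra \HH(\cdot)^{hS^1} \lra \HH(\cdot)^{tS^1},
\]
i.e. $\HC[1] \to \HC^- \to \HP$, evaluated on $T\otimes_\bbC R$ for $R\in\AffC$. Because $Re$ is a colimit-preserving functor between stable $\infty$-categories, it preserves fiber/cofiber sequences, yielding a cofiber sequence
\[
    Re\bigl(\HC(T\otimes_\bbC\cdot /k)\bigr)[1] \lra Re\bigl(\HC^{-}(T\otimes_\bbC\cdot /k)\bigr) \lra Re\bigl(\HP(T\otimes_\bbC\cdot /k)\bigr).
\]
By part (a), the first term vanishes, so the second map is an equivalence, proving (b).

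The only nontrivial point is the justification that $Re$ commutes with the relevant homotopical constructions; this is automatic from the construction of $Re$ and is already used implicitly elsewhere in the paper. No smoothness or properness hypothesis on $T$ is needed for the argument; the parenthetical assumption only serves to place the statement in the context where $\HP$ is the invariant of primary interest.
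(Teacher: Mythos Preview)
Your proof is correct and essentially identical to the paper's: both deduce (a) from Proposition~\ref{ReHH} by writing $\HC = \HH_{hS^1}$ as a colimit over $BS^1$ and using that $Re$ preserves colimits, and both deduce (b) from (a) via the same cofiber sequence (the paper phrases it as ``the cofiber of $\HC^- \to \HP$ is $\HC[2]$'', which is the same as your $\HC[1] \to \HC^- \to \HP$). Your closing remark that no smoothness or properness is needed is also in line with the paper, which places those hypotheses in parentheses.
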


Here by $\HC$, $\HC^-$ and $\HP$ we denoted the cyclic homology, the negative cyclic homology and the periodic cyclic homology respectively. 

\begin{proof}

The cofiber of the map $\HC^-(T\otimes_\bbC \cdot/k) \lra \HP(T\otimes_\bbC \cdot/k)$ is $\HC(T\otimes_\bbC \cdot/k)[2]$, so the claim b) follows from a), since topological realization commutes with colimits and shifts.

To prove a), we proceed by 

$$
    Re(\HC(T\otimes_\bbC \cdot/k)) \simeq Re (\colim_{BS^1}\HH(T\otimes_\bbC \cdot/k)) \simeq \colim_{BS^1}Re\HH(T\otimes_\bbC \cdot/k) \simeq 0,
$$

using commutation with colimits and the previous proposition.

\end{proof}

Since the realization of negative cyclic homology coincides with the realization of periodic cyclic homology, it, in a sense, accumulates good properties of both theories. This idea will be used in the next section.

\ \ 

\section{Nilpotent extensions of connective dg-algebras}

In this section, we remind the definition of (derived) nil-invariants and %Goodwillie's
theorems about nil-invariance of the fiber of the cyclotomic trace $\rK^{inv}$ and of periodic cyclic homology. %and $\rK^{inv}$. %or just put Kinf here?
 We use these theorems to prove nil-invariance of semi-topological K-theory.

\ \ 

Suppose that $v: B \ra A$ is a map of connective dg-algebras or connective ring spectra, such that the induced map $\pi_0v$ is a surjection with a nilpotent kernel $I$:  $\pi_0 A \simeq \pi_0 B/I$. Then we call the map $v$ a \it nilpotent extension \rm of %connective $\bbC$-dg-algebras or
connective ring spectra. We say that the functor from the category of (connective) dg-algebras is \it nil-invariant \rm if it induces equivalences on nilpotent extensions. The following two theorems %of Goodwillie
provide two important examples of nil-invariants: 

\begin{theo}\label{GHP} (\cite{G85})
%([G%, Theorem II.5.1
%]) 
Let $k$ be a field of characteristic 0 and $v: B \ra A$ be a nilpotent extension of connective $k$-dg-algebras. Then the induced map $\HP(B/k) \ra \HP(A/k)$ is an equivalence. 

\end{theo}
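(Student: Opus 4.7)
My plan is to reduce to a square-zero situation and then apply an explicit characteristic-zero contracting-homotopy argument. First, since $\HP(\cdot/k)$ is an exact functor out of connective dg-algebras (it factors through the stable $\infty$-category of mixed complexes), I can denote the fiber of $\HP(B/k)\to\HP(A/k)$ by $\HP(B,A/k)$, and it suffices to show this vanishes. Setting $I = \ker(\pi_0 v)$ and using the $I$-adic filtration on $B$, one obtains a finite tower of connective $k$-dg-algebras interpolating between $A$ and $B$ whose successive cofibers are square-zero extensions. By induction on the nilpotency degree of $I$, it therefore suffices to handle the case of a square-zero extension $B\to A$ with kernel $J$ satisfying $J\cdot J=0$.

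Second, for such a square-zero extension I would compute the relative Hochschild complex $\HH(B,A/k)$ directly: up to a natural filtration it decomposes into pieces of the form $J\otimes_A A^{\otimes n}$, carrying the usual Hochschild differential and the homotopy-coherent $S^1$-action. The relative $\HP$ is then the Tate construction applied to this $S^1$-action on $\HH(B,A/k)$. The decisive input from characteristic zero is that on each tensor-weight piece the cyclic group action can be averaged, producing an explicit contracting homotopy on $\HC(B,A/k)$ (equivalently, the relevant spectral sequence collapses after inverting small integers). Once $\HC(B,A/k)=0$ is established, the Connes SBI fiber sequence $\HC^-(B,A/k)\to\HP(B,A/k)\to\HC(B,A/k)[2]$, together with the identification $\HP \simeq \lim \HC^-[2i]$ via periodicity, forces $\HP(B,A/k)=0$.

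The main obstacle is the last step. In the classical case of ordinary associative algebras, this averaging is Goodwillie's original computation on a bicomplex; in the dg-setting one must either carry it out on a cofibrant model of the Bar construction with the correct signs, or, alternatively, appeal to the HKR-type decomposition of $\HH(\cdot/k)$ in characteristic zero into Adams weight pieces and use that the Connes operator shifts weight in a way that becomes nullhomotopic on the relative part after inverting the integers. Either route requires upgrading the calculation from the underlying chain complex to a homotopy-coherent statement about spectra, which is precisely why the theorem is genuinely deeper than the analogous (and, as seen in Proposition~\ref{ReHH}, rather formal) vanishing for $\HH$ after topological realization.
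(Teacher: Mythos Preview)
The paper does not supply a proof of this statement; it is cited from \cite{G85} and used as a black box. Comparing to Goodwillie's actual argument, your sketch has a genuine error at the key step: you claim that cyclic averaging produces a contracting homotopy on $\HC(B,A/k)$, i.e.\ that relative cyclic homology vanishes for a square-zero extension. This is false already for $B=k[\varepsilon]/(\varepsilon^2)\to A=k$, where $\HC_0(B,A/k)\cong k\cdot\varepsilon\neq 0$. Averaging over $\bbZ/n$ in characteristic~$0$ is what allows one to identify Connes' cyclic complex with the $(b,B)$-mixed complex, but it does not annihilate the relative theory.

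Goodwillie's mechanism is different: he shows that the Connes periodicity operator $S\colon \HC_n(B,A/k)\to \HC_{n-2}(B,A/k)$ is \emph{nilpotent} on the relative groups. Filtering the relative cyclic bicomplex by the number of tensor factors lying in the nilpotent ideal and analysing how $b$ and $B$ interact with this filtration, one finds $S^N=0$ for $N$ large compared to the nilpotency degree of $I$. Since $\HP_\ast(B,A/k)\simeq \varprojlim_S \HC_{\ast+2n}(B,A/k)$, this yields $\HP(B,A/k)=0$ directly; no detour through $\HC^-$ is needed, and your identification ``$\HP\simeq\lim\HC^-[2i]$'' is not the correct one in any case. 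A secondary imprecision: $I\subset\pi_0 B$, not $B$, so ``the $I$-adic filtration on $B$'' does not literally make sense; Goodwillie in fact works with a general nilpotent ideal rather than reducing to the square-zero case.
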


\iffalse

\begin{theo}\label{GHN} (\cite{G86})
%([G]) 
Let $v: B \ra A$ be a nilpotent extension of connective $\bbC$-dg-algebras. Then the induced map 
\[fib(\tilde\rK(B)\otimes \bbQ \xrightarrow{tr(B)} \HC^-(B)) \ra fib (\tilde\rK(A)\otimes \bbQ \xrightarrow{tr(A)} \HC^-(A))\] 

is an equivalence.

\end{theo}

\fi

\begin{theo}\label{DGMTC} (\cite{DGM}, see also \cite{Ras} for a modern presentation)
%([G]) 
Let $v: B \ra A$ be a nilpotent extension of connective ring spectra. Then the induced map 
\[\mathrm{fib}(\tilde\rK(B) \xrightarrow{tr(B)} \TC(B)) \ra \mathrm{fib} (\tilde\rK(A) \xrightarrow{tr(A)} \TC(A))\] 

is an equivalence.

\end{theo}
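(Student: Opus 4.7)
The plan is to reduce to square-zero extensions by dévissage and then exploit the fact that both $\tilde\rK$ and $\TC$ admit $\THH$ as their Goodwillie derivative in the bimodule direction, so that the cyclotomic trace identifies their Taylor towers layer by layer. Concretely: since $\pi_0 I$ is nilpotent of some index $n$, factor $v$ as a tower
\[
    B = B/I^n \lra B/I^{n-1} \lra \cdots \lra B/I = A
\]
of connective ring spectra, each stage a square-zero extension. Formation of $\mathrm{fib}(\tilde\rK \to \TC)$ is functorial, so by composing the claims at each stage it suffices to handle the case where the derived kernel $M$ is a connective $A$-bimodule with trivial multiplication.

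Next, treat the split case $B = A \oplus M$. Consider the reduced functors
\[
    F(M) := \mathrm{fib}\bigl(\tilde\rK(A \oplus M) \lra \tilde\rK(A)\bigr), \qquad G(M) := \mathrm{fib}\bigl(\TC(A \oplus M) \lra \TC(A)\bigr)
\]
from connective $A$-bimodules to $\Sp$, and the natural transformation $F \to G$ induced by the cyclotomic trace. Goodwillie-analyticity of $\rK$ and $\TC$ in this variable ensures that both Taylor towers converge on connective $M$; the $n$-th layer of each is computable in terms of $\THH(A; M^{\otimes n})$ with the appropriate cyclic (resp.\ cyclotomic) structure, and the trace identifies these layers compatibly. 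Hence $F \to G$ is an equivalence on every layer, and therefore an equivalence.

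Finally, general square-zero extensions are reduced to split ones: a square-zero extension of $A$ by $M$ can be realized as a homotopy pullback against the trivial extension $A \oplus M[1]$, and the split case applied to the shifted bimodule $M[1]$ then propagates the equivalence through this pullback. Equivalently, one can resolve a non-split square-zero extension by a simplicial diagram of split extensions, using that $\tilde\rK$ and $\TC$ are sufficiently continuous on connective inputs for the relevant colimits.

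The main obstacle is the layer-wise comparison in the second step: identifying each Taylor derivative of $\rK$ and of $\TC$ with an explicit cyclic/cyclotomic $\THH$-expression and verifying that the cyclotomic trace realizes the expected map at each level. This is the core technical input of \cite{DGM}; a modern reformulation in the language of $\infty$-categories is given in \cite{Ras}.
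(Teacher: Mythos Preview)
The paper does not prove this theorem. It is stated with attribution to \cite{DGM} and \cite{Ras} and used as a black box; there is no proof or even sketch in the paper itself. So there is nothing to compare against on the paper's side.

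Your sketch does track the strategy of the cited references (d\'evissage to square-zero extensions, then Goodwillie calculus in the bimodule variable with the trace identifying layers), and in that sense it is a fair summary of where the result comes from. One point does need care: the first step as you wrote it is not quite well-posed. The ideal $I$ lives in $\pi_0 B$, not in $B$, so the tower ``$B = B/I^n \to B/I^{n-1} \to \cdots \to B/I = A$'' does not literally make sense for connective ring spectra; there is no evident ring spectrum $B/I^k$. The actual reduction in \cite{DGM} and \cite{Ras} is more delicate: one combines the classical nilpotent filtration on $\pi_0$ with a Postnikov-type filtration in the higher homotopy, together with a connectivity estimate ensuring convergence of the resulting tower of relative terms. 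Once that is set up correctly, the rest of your outline (split square-zero via Taylor towers with $\THH$-layers, then general square-zero via the pullback against the trivial extension by $M[1]$) is the right shape.
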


Here $tr: \tilde\rK \ra \TC$ is the cyclotomic trace map %check add reference finish the sentence 

\begin{remar}

Using the stable structure, the last theorem can be reformulated as a statement about equivalence of $\coker(\tilde\rK(B) \ra \tilde\rK(A))$ and $\coker(\TC(B) \ra \TC(A))$. The former cokernel is equivalent to $\coker(\rK(B) \ra \rK(A)$) due to nil-invariance of non-positive K-theory.

\end{remar}

\begin{prop}\label{Kneg}

Let $v: B \ra A$ to be a nilpotent extension of connective $\bbS$-algebras. Then the induced map on the coconnective truncations of algebraic K-theory $\tau_{\le 0}\rK(B) \ra \tau_{\le 0}\rK(A)$ is an equivalence. 

\end{prop}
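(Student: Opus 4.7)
The plan is to reduce the statement to the classical Bass--Weibel nil-invariance of non-positive algebraic K-theory for ordinary rings, via the intermediate fact that non-positive K-theory of a connective $\bbS$-algebra depends only on its $\pi_0$.

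First I would establish that for any connective $\bbS$-algebra $R$, the truncation map $R \to \pi_0 R$ induces an equivalence $\tau_{\le 0}\rK(R) \xrightarrow{\sim} \tau_{\le 0}\rK(\pi_0 R)$. On $\pi_0$ this amounts to showing that the base-change functor $\Perf(R) \to \Perf(\pi_0 R)$ induces an isomorphism of Grothendieck groups: every finitely generated projective $\pi_0 R$-module lifts through the Postnikov tower of $R$, each stage being a square-zero extension where finitely generated projective modules lift uniquely up to isomorphism, and this lifting is compatible with direct sum decompositions at the Grothendieck group level. For the negative K-groups $\pi_{-i}\rK$ with $i \ge 1$, one invokes the Bass--Weibel--Thomason delooping, which expresses $K_{-i}$ as a suitable cokernel built out of $K_{1-i}$ of iterated Laurent extensions $R[t_1^{\pm 1},\ldots,t_i^{\pm 1}]$. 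Since $\pi_0$ commutes with Laurent extensions, the $\pi_0$-level comparison propagates inductively to all non-positive K-groups.

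Applying this intermediate claim to both $B$ and $A$, the problem reduces to showing that the ordinary-ring surjection $\pi_0 B \twoheadrightarrow \pi_0 A$, whose kernel is a nilpotent ideal, induces isomorphisms $K_n(\pi_0 B) \to K_n(\pi_0 A)$ for all $n \le 0$. This is exactly the classical nil-invariance theorem of Bass and Weibel.

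The main obstacle is the intermediate claim. While the $K_0$ comparison via the Postnikov tower is essentially routine, the propagation to negative K-groups must be verified in the setting of connective ring spectra rather than just discrete rings. This can be handled either directly via Bass--Weibel--Thomason delooping (noting that the relevant base-change constructions lift naturally to connective $\bbS$-algebras), or more formally by appealing to the framework of non-connective K-theory of small stable $\infty$-categories developed by Blumberg--Gepner--Tabuada and Schlichting, where nil-invariance in non-positive degrees follows from the corresponding statement for the heart of the Postnikov t-structure equipped with its natural exact structure.
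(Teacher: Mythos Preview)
Your proposal is correct and follows essentially the same route as the paper's sketch: reduce non-positive $K$-theory to $\pi_0$ via the known $K_0$-comparison for connective ring spectra and Bass's fundamental theorem, then invoke classical nil-invariance for discrete rings. The paper organizes the two reductions in the opposite order---first proving nil-invariance of $K_0$ (reducing to $\pi_0$, then to Noetherian classical rings via filtered colimits, then lifting idempotents), and only afterwards Bass-inducting down to negative degrees---but the ingredients are identical.
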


This seems to be a folklore statement. We learned it in this generality from S. Raskin; the experts probably knew it for a long time.

\begin{proof}[Sketch of the proof]

We proceed in two steps.

    The first step is to show nil-invariance of $\rK_0$. For a connective ring $R$, $\rK_{0,1}(R) \simeq \rK_{0,1}(\pi_0R)$ (\cite{DGM}, 3.1.2 or \cite{LurKM}, Lecture 20, Corollaries 3, 4), so we can reduce to classical rings and then to Noetherian classical rings using that K-theory commutes with filtered colimits. For a classical Noetherian nilpotent ring extension $A = B/I$, the natural correspondence
    
$$
    Proj^{fg}(B) \lra  Proj^{fg}(A)
$$
$$
    P \mapsto P' \simeq P\otimes_BA
$$

is a bijection on the sets of isomorphism classes, which can be shown by reduction to the square-zero case and application of an explicit process of lifting idempotents.

The second step is to use Bass's formula %$\rK_{i-1}(A) = coker ( \rK_i(A[t]) \oplus \rK_i(A[t^{-1}]) \ra \rK_i(A[t^{\pm 1}] )$
to deduce nil-invariance for $\rK_{i-1}$ from nil-invariance for $\rK_i$ and to proceed by induction on $-i$.

\end{proof}

\ \ 

Now we use the theorems above and the statements from the previous sections to prove the main result:

\begin{theo}\label{Kstnil}

Let $v: B \ra A$ be a nilpotent extension of connective $\bbC$-dg-algebras. Then the induced map $\rK^\st(B) \ra \rK^\st(A)$ is an equivalence.

\end{theo}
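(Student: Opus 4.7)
The plan is to use the Dundas--Goodwillie--McCarthy theorem (Theorem~\ref{DGMTC}) to replace the question about algebraic K-theory by one about $\TC$, and then handle $\TC$ by comparison with $\HC^-$, whose topological realization is nil-invariant by Proposition~\ref{ReHC}(b) combined with Goodwillie's theorem on $\HP$ (Theorem~\ref{GHP}).

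First, for each $R \in \AffC$, the base-changed map $B \otimes_\bbC R \to A \otimes_\bbC R$ remains a nilpotent extension of connective $\bbC$-dg-algebras: if $\Ker(v)^n = 0$ in $\pi_0 B$, then $(\Ker(v) \otimes_\bbC R)^n = 0$ in $\pi_0(B \otimes_\bbC R)$. Applying Theorem~\ref{DGMTC} pointwise in $R$, and combining with Proposition~\ref{Kneg} to extend from $\tilde\rK$ to non-connective $\rK$ (since $\tau_{\le -1}\rK = \rK/\tilde\rK$ is itself nil-invariant), we obtain
\[
\mathrm{fib}\bigl(\rK(B \otimes_\bbC R) \to \rK(A \otimes_\bbC R)\bigr) \simeq \mathrm{fib}\bigl(\TC(B \otimes_\bbC R) \to \TC(A \otimes_\bbC R)\bigr)
\]
functorially in $R$. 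Because $Re$ is stable and colimit-preserving, it commutes with fibers, reducing the theorem to showing that $Re\TC(B \otimes_\bbC \cdot) \to Re\TC(A \otimes_\bbC \cdot)$ is an equivalence.

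Next, I would prove $Re\THH = 0$ by repeating the argument of Proposition~\ref{ReHH} with $\THH$ in place of $\HH(\cdot/k)$: topological Hochschild homology is symmetric monoidal, so $Re\THH$ is a connective ring spectrum; $\pi_0 \THH(\bbC) = \bbC$, and any two complex numbers are algebraically equivalent via $\Spec \bbC[t]$ and the class of $t$, forcing $\pi_0 Re\THH = 0$ and hence $Re\THH = 0$. I would then use this vanishing to identify $Re\TC$ with $Re\HC^-(\cdot/\bbC)$ via the natural linearization $\TC \to \TC^- := \THH^{hS^1} \to \HH(\cdot/\bbC)^{hS^1} = \HC^-(\cdot/\bbC)$, and conclude by combining Proposition~\ref{ReHC}(b) (which identifies $Re\HC^-$ with $Re\HP$) with Theorem~\ref{GHP} (nil-invariance of $\HP$).

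The main obstacle will be the identification of $Re\TC$ with $Re\HC^-(\cdot/\bbC)$: since $Re$ commutes with colimits but not with homotopy limits like $(-)^{hS^1}$, one cannot directly deduce $Re\TC^- = 0$ from $Re\THH = 0$. One strategy is to use the Nikolaus--Scholze formula $\TC(R) = \mathrm{fib}(\mathrm{can} - \phi \colon \TC^-(R) \to \mathrm{TP}(R))$ and argue that the comparison fibers $\TC^- \to \HC^-(\cdot/\bbC)$ and $\mathrm{TP} \to \HP(\cdot/\bbC)$ carry natural $\THH$-module structures and are therefore annihilated by $Re\THH = 0$; another is to first establish rational nil-invariance via a rational form of Goodwillie's theorem and then bootstrap to the integral statement using the integral DGM.
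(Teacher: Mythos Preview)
Your overall strategy matches the paper's: apply Dundas--Goodwillie--McCarthy pointwise in $R$, pass from $\TC$ to $\HC^-$, then use Proposition~\ref{ReHC}(b) and Goodwillie's nil-invariance of $\HP$. The first paragraph of your proposal is essentially what the paper does.

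The difference is that the ``main obstacle'' you identify is a phantom. You do not need to prove $Re\THH = 0$, nor worry about $Re$ commuting with $(-)^{hS^1}$, nor invoke Nikolaus--Scholze. Every algebra $S = B\otimes_\bbC R$ or $A\otimes_\bbC R$ appearing in the diagram is a connective $\bbQ$-algebra, and for such $S$ there is a \emph{pointwise} natural equivalence
\[
\TC(S)\;\simeq\;\TC^-(S)\;\simeq\;\THH(S)^{hS^1}\;\simeq\;\HH(S/\bbQ)^{hS^1}\;=\;\HC^-(S/\bbQ).
\]
The first equivalence holds because the cyclotomic structure on $\THH$ of a rational algebra is trivial; the third because $\bbS_\bbQ\simeq H\bbQ$, so $\THH(S)\simeq\HH(S/\bbQ)$ for $\bbQ$-algebras $S$. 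This is an equivalence of presheaves on $\AffC$, hence after applying $Re$ you get $Re\,\TC(B\otimes_\bbC\cdot)\simeq Re\,\HC^-(B\otimes_\bbC\cdot/\bbQ)$ on the nose, and likewise for $A$. Proposition~\ref{ReHC}(b) (with $k=\bbQ$) and Theorem~\ref{GHP} then finish the proof exactly as you described.

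Two small corrections: the base for $\HC^-$ must be $\bbQ$, not $\bbC$ --- the linearization $\THH(S)\to\HH(S/\bbC)$ is \emph{not} an equivalence even for $\bbC$-algebras --- and Proposition~\ref{ReHC} is stated for arbitrary $k\subset\bbC$ precisely to accommodate this.
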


\begin{proof}

We want to use the $Re'$ presentation of semi-topological K-theory to express it as a colimit of algebraic K-theories, then to apply Theorem~\ref{DGMTC} termwise, and then, after commuting some colimits, use our comparison of the realizations of negative cyclic homology and periodic homology, together with nil-invariance of the latter. %DGMTC instead check

\  \ 

Remind that, by Corollary~\ref{Kstformula}, the map $\rK^\st(B)\ra \rK^\st(A)$ can be expressed as

$$
    | \colim_{\Delta^n_\tp \ra (\Spec R)^\an}\rK(B\otimes_\bC R)| \xrightarrow{\rK^\st(v)} | \colim_{\Delta^n_\tp \ra (\Spec R)^\an}\rK(A\otimes_\bC R)|,
$$

where the map is determined by the map of functors $\rK(B\otimes\cdot)\ra\rK(A\otimes\cdot)$.

Consider the cokernel of this map. Commuting the colimits we can express it as

$$
    \coker(\rK^\st(v)) \simeq |\colim_{\Delta^n_\tp \ra (\Spec R)^\an}(\coker(\rK(B\otimes_\bC R)\ra\rK(A\otimes_\bC R)))| .
$$

\ \ 

Now, since $R$ in the diagram is taking values in classical regular rings, the map $v\otimes R$ is a nilpotent extension, so, 
using functoriality of the cyclotomic trace map, we can apply Theorem~\ref{DGMTC} termwise: 

$$
    \coker(\rK^\st(v)) \simeq |\colim_{\Delta^n_\tp \ra (\Spec R)^\an}(\coker(\TC(B\otimes_\bC R)\ra\TC(A\otimes_\bC R)))| .
$$

For a connective 
$\bbQ$-algebra $S$, $\TC(S) \simeq \TC^-(S) \simeq \THH(S)^{hS^1} \simeq \HH(S / \bbQ)^{hS^1} \simeq \HC^-(S / \bbQ) $,  so all the finite coefficient type data vanishes 
and we get

$$
    \coker(\rK^\st(v)) \simeq |\colim_{\Delta^n_\tp \ra (\Spec R)^\an}(\coker(\HC^-(B\otimes_\bC R)\ra\HC^-(A\otimes_\bC R)))| .
$$

We can commute the colimits in the formula again, to get back the cokernel of realizations:

$$
    \coker(\rK^\st(v))\otimes\bbQ \simeq \coker(Re(HC^-(B\otimes_\bbC\cdot)) \ra Re(HC^-(A\otimes_\bbC\cdot))) .
$$

By Proposition~\ref{ReHC}, functoriality of $u$-inversion in $\HC^-$ and one more iteration of commuting the colimits, we get

$$
    \coker(\rK^\st(v))\otimes\bbQ \simeq \coker(Re(HP(B\otimes_\bbC\cdot)) \ra Re(HP(A\otimes_\bbC\cdot))) \simeq 
$$
    
$$
   \simeq Re(\coker(HP(B\otimes_\bbC\cdot) \ra HP(A\otimes_\bbC\cdot))) \simeq Re(0) \simeq 0 ,
$$

where for the second to the last equivalence we apply Theorem~\ref{GHP}.

\end{proof}

\ \ 

\begin{coro}\label{lcnil}

Let $v: B \ra A$ be a nilpotent extension of connective $\bbC$-dg-algebras. Then

$$
Ch^\tp_A\otimes \bbC: \rK^\tp(A) \otimes \bbC \ra \HP(A/\bbC)
$$ 

is an equivalence if and only if so is the map

$$
Ch^\tp_B\otimes \bbC: \rK^\tp(B) \otimes \bbC \ra \HP(B/\bbC) .
$$

\end{coro}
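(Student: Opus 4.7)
The plan is to observe that both source and target of the topological Chern character are nil-invariant, and then to conclude by naturality.

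First, since $\rK^\tp(T)$ is by definition $\rK^\st(T)[\beta^{-1}]$ and inversion of the Bott element commutes with taking cofibers, Theorem~\ref{Kstnil} immediately implies that the induced map $\rK^\tp(B) \to \rK^\tp(A)$ is an equivalence. Tensoring with $\bbC$ preserves equivalences of spectra, so $\rK^\tp(B)\otimes \bbC \to \rK^\tp(A)\otimes \bbC$ is an equivalence as well. On the other side, Theorem~\ref{GHP} provides nil-invariance of $\HP(\cdot/\bbC)$ for connective $\bbC$-dg-algebras, so $\HP(B/\bbC) \to \HP(A/\bbC)$ is also an equivalence.

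Next, I would invoke the naturality of the topological Chern character $Ch^\tp$ with respect to morphisms of dg-algebras (this is built into its construction as a natural transformation of functors $dgCat_\bbC \to \Sp$). Applied to $v: B \to A$, this gives a commutative square
$$
\xymatrix{
\rK^\tp(B)\otimes \bbC \ar[r]^{Ch^\tp_B\otimes \bbC} \ar[d] & \HP(B/\bbC) \ar[d] \\
\rK^\tp(A)\otimes \bbC \ar[r]^{Ch^\tp_A\otimes \bbC} & \HP(A/\bbC)
}
$$
in which, by the previous paragraph, both vertical arrows are equivalences. The 2-out-of-3 property for equivalences in the stable $\infty$-category $\Sp$ then implies that the top horizontal arrow is an equivalence if and only if the bottom one is, which is exactly the desired statement.

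There is essentially no obstacle here beyond assembling the pieces already established: the real content is packed into Theorem~\ref{Kstnil} (which in turn relied on Theorems~\ref{DGMTC} and~\ref{GHP} and the realization computations of Section~4), while the corollary itself reduces to a diagram chase using naturality of $Ch^\tp$.
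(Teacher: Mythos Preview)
Your proof is correct and follows essentially the same route as the paper: both set up the naturality square for $Ch^\tp\otimes\bbC$, use Theorem~\ref{GHP} for the right column, Theorem~\ref{Kstnil} (plus compatibility of Bott-inversion and $\otimes\,\bbC$ with equivalences) for the left column, and conclude by 2-out-of-3. The paper phrases the nil-invariance of $\rK^\tp\otimes\bbC$ via the identification $\rK^\tp(\cdot)\otimes\bbC \simeq (\rK^\st(\cdot)\otimes\bbQ)\otimes_{ku_\bbQ}KU_\bbQ\otimes_\bbQ\bbC$, but your direct argument that Bott-inversion and tensoring with $\bbC$ preserve equivalences is equally valid and slightly more transparent.
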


\begin{proof}

We have a natural commutative diagram

$$
\xymatrix{
\rK^\tp(B)\otimes\bbC \ar[r]^{\ \ \ Ch^\tp_B} \ar[d]& \HP(B/\bbC) \ar[d] \\
\rK^\tp(A)\otimes\bbC \ar[r]^{\ \ \ Ch^\tp_A}       & \HP(A/\bbC)      , \\
} 
$$

where the right column 
map is an equivalence by Theorem~\ref{GHP}. By Theorem~\ref{Kstnil}, the left column is an equivalence as well, since there is a natural equivalence $\rK^\tp(\cdot) \otimes \bbC \simeq \rK^\st(\cdot)\otimes_{ku}KU\otimes\bbC \simeq (\rK^\st(\cdot)\otimes\bbQ)\otimes_{ku_\bbQ}(KU_\bbQ)\otimes_\bbQ\bbC$ and this is a product of a nil-invariant and a constant functor. So the upper map is an equivalence iff so is the bottom.

\end{proof}

\ \ 

\section{Some examples and applications}

% Maybe write something here?

\subsection{Connective dg-algebras}

We remind that any smooth and proper $\bbC$-dg-category is Morita-equivalent to a smooth proper $\bbC$-dg-algebra, so the question about existence of a natural (Morita-invariant) rational structure on periodic cyclic homology of smooth proper $\bbC$-dg-categories essentially boils down to existence of a natural rational structure on periodic cyclic homology of dg-categories of the form $\Perf(B)$ for $B$ a smooth and proper $\bbC$-dg-algebra. As an application of Theorem~\ref{Kstnil}, we prove that $\rK^\tp$ provides such a structure in the case of connective proper $\bbC$-dg-algebras. The smoothness assumption is not necessary in this case.

\begin{theo}\label{lccnprop}

Let $B$ be a connective proper $\bbC$-dg-algebra. Then the complexified Chern character map

$$ 
    \rK^\tp(B)\otimes \bbC \ra \HP(B)
$$

is an equivalence.

\end{theo}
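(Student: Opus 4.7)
The plan is to reduce the statement to the base case $B=\bbC$, where the lattice conjecture already follows from Proposition~\ref{lcsch} applied to $X=\Spec \bbC$, via a short chain of nilpotent extensions of connective $\bbC$-dg-algebras — each of which can be absorbed by Corollary~\ref{lcnil} — together with one Morita equivalence.

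First I would replace $B$ by its truncation $\rH_0 B$ using the canonical morphism $B\to \rH_0 B$ of connective dg-algebras. Here $\rH_0$ of this morphism is the identity on $\rH_0 B$, so the kernel on $\rH_0$ is zero and in particular nilpotent, and Corollary~\ref{lcnil} applies. Since $B$ is proper, $\rH_0 B$ is a finite-dimensional classical $\bbC$-algebra, hence Artinian with nilpotent Jacobson radical $J$. A second application of Corollary~\ref{lcnil}, this time to the quotient $\rH_0 B \to (\rH_0 B)/J$ of classical (therefore connective) dg-algebras, reduces the problem to the semisimple quotient $(\rH_0 B)/J$.

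By the Wedderburn--Artin theorem we have $(\rH_0 B)/J \simeq \prod_{i=1}^m M_{n_i}(\bbC)$ for some integers $m,n_1,\dots,n_m$. Since $LC$ is closed under Morita equivalences (Theorem~\ref{lcall}), this can be replaced by $\bbC^m$. Then $\Perf(\bbC^m)\simeq \prod_{i=1}^m \Perf(\bbC)$ fits into iterated split exact triples with summands $\Perf(\bbC)$, so the 2-out-of-3 property from Theorem~\ref{lcall} reduces the question to $B=\bbC$, which is handled by Proposition~\ref{lcsch}.

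The real difficulty — derived nil-invariance of $\rK^\st$ for connective $\bbC$-dg-algebras — has already been dealt with in Theorem~\ref{Kstnil}. Given that theorem, the reduction above is almost purely formal; the only point worth double-checking is that both the truncation $B\to \rH_0 B$ and the quotient $\rH_0 B \to (\rH_0 B)/J$ genuinely are nilpotent extensions of connective dg-algebras in the sense of Section 5, which is immediate in each case. I do not expect any genuine obstacle beyond the bookkeeping of these reductions.
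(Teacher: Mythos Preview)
Your argument is correct and matches the paper's own proof almost exactly: reduce via Corollary~\ref{lcnil} first to the classical finite-dimensional algebra $\rH_0 B$, then to its semisimple quotient, and finish with Wedderburn plus additivity to land on the case $B=\bbC$. The only cosmetic issue is that you invoke Theorem~\ref{lcall} for the Morita-invariance and 2-out-of-3 closure of $LC$, which is mildly circular since Theorem~\ref{lccnprop} is itself an ingredient of Theorem~\ref{lcall}; but those closure properties follow directly from $\rK^\tp$ and $\HP(\cdot/\bbC)$ being weakly localizing Morita-invariant functors, so no harm is done.
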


\begin{proof}

We use Corollary~\ref{lcnil} to reduce to the case of classical finite-dimensional algebra $B^\red$ without nilpotents, hence semi-simple. Now, by Wedderburn's theorem, we deduce that $B^\red$ is Morita-equivalent to a finite product of $\bbC$, so our statement follows from the corresponding equivalence for $\Spec\mathbb{C}$ and additivity.

\end{proof}

This theorem generalizes Proposition 4.37 \cite{Bla} because of the following simple observation.

\begin{prop}\label{Kstneg}

Let $v: B \ra A$ be a nilpotent extension of connective $\bbC$-dg-algebras and $A$ be a classical Noetherian $\bbC$-algebra of finite global dimension. Then the natural maps

$$
    \tilde \rK^\st (B) \lra \rK^\st (B) 
$$

and

$$
    \tilde \rK^\tp (B) \lra \rK^\tp (B) 
$$

are equivalences of spectra.

\end{prop}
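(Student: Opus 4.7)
The plan is to show that $\rK(B\otimes_\bbC R)$ is already connective for every smooth finite-type $\bbC$-algebra $R$; the rest is formal. Indeed, granting this, the termwise connective cover map $\tilde\rK(B\otimes_\bbC R)\to\rK(B\otimes_\bbC R)$ is an equivalence on the smooth site, so realizing it yields an equivalence $Re^\sm(\tilde\rK(B\otimes_\bbC\cdot))\xrightarrow{\simeq}Re^\sm(\rK(B\otimes_\bbC\cdot))$, and the Friedlander--Walker comparison $Re(F)\simeq Re^\sm(l^\ast F)$ recalled in the paper promotes this to $\tilde\rK^\st(B)\xrightarrow{\simeq}\rK^\st(B)$. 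Inverting the Bott element $\beta$, which lies in $\pi_2$ of both $\tilde\rK^\st(\bbC)$ and $\rK^\st(\bbC)$ (each equal to $ku$ by Theorem~\ref{Kstpoint}) and is matched by the natural map at the point, upgrades this to the topological statement.

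To see that $\rK(B\otimes_\bbC R)$ is connective for smooth $R$, observe first that $v\otimes_\bbC R\colon B\otimes_\bbC R\to A\otimes_\bbC R$ remains a nilpotent extension of connective dg-algebras: the kernel of $\pi_0(B)\otimes_\bbC R\to A\otimes_\bbC R$ is $I\otimes_\bbC R$, where $I=\ker(\pi_0 B\to A)$ is nilpotent, and hence so is $I\otimes_\bbC R$ (flatness of $R$ over the field $\bbC$ makes $(I\otimes R)^n=I^n\otimes R=0$ as soon as $I^n=0$). By Proposition~\ref{Kneg} (nil-invariance of non-positive K-theory),
$$
\pi_n\rK(B\otimes_\bbC R)\simeq\pi_n\rK(A\otimes_\bbC R)\qquad\text{for all }n\le 0 ,
$$
so it suffices to prove vanishing of the negative K-groups of $A\otimes_\bbC R$. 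Since $R$ is smooth of finite type, hence regular Noetherian of finite Krull dimension, and $A$ is Noetherian of finite global dimension by hypothesis, the ring $A\otimes_\bbC R$ is right Noetherian of finite right global dimension (by a standard K\"unneth-type bound using flatness of $R$ over the field $\bbC$). Noetherian rings of finite global dimension have vanishing negative algebraic K-theory (Quillen--Bass--Schlichting), so we are done.

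The one non-formal ingredient here is the global dimension bound $\mathrm{gldim}(A\otimes_\bbC R)\le\mathrm{gldim}(A)+\dim R$: in the commutative regular case this is classical, and for the general setting of Noetherian $A$ of finite global dimension with smooth commutative $R$ it is a standard homological estimate relying on the flatness of $R/\bbC$. Everything else---the nil-invariance in non-positive degrees, the passage from the smooth site to all affines of finite type via Friedlander--Walker, and the Bott inversion at the end---is a direct application of tools already deployed elsewhere in the paper.
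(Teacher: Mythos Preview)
Your argument is correct and matches the paper's approach: both show that $\rK(B\otimes_\bbC R)$ is connective for smooth $R$ by combining nil-invariance of non-positive K-theory (Proposition~\ref{Kneg}) with the vanishing of negative K-theory for the Noetherian finite-global-dimension ring $A\otimes_\bbC R$, and then pass to the realization. The only difference is packaging---you restrict to smooth $R$ via the Friedlander--Walker comparison $Re\simeq Re^{\sm}\circ l^\ast$, whereas the paper plugs directly into the $Re'$-formula of Corollary~\ref{Kstformula} (tacitly restricting to regular $D$)---and you are more explicit in flagging the global-dimension bound for $A\otimes_\bbC R$, which the paper leaves implicit.
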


This shows that in the setting of Proposition 4.37 \cite{Bla}, $\tilde\rK^\st(B) \simeq \rK^\st(B)$ and the pseudo-topological K-theory can safely be replaced by $\rK^\tp$ even without proving Theorem~\ref{lccnprop} (from which it would follow by dimension counting). 

\begin{remar}

There are simple examples when $\tilde\rK^\st(T) \ne \rK^\st(T)$, e. g. taking $T:=\bbC[t_0,t_1]/(t_0t_1(1-t_0-t_1))$ works: $\tilde\rK^\st(\bbC[t_0,t_1]/(t_0t_1(1-t_0-t_1))) \simeq ku\oplus \Sigma ku$, while $\rK^\st(\bbC[t_0,t_1]/(t_0t_1(1-t_0-t_1))) \simeq \Omega ku\oplus ku$, so $\rK^\st_{-1}(\bbC[t_0,t_1]/(t_0t_1(1-t_0-t_1)))\simeq \bbZ$. However, it seems a tricky task to find a decent dg-category $T$, such that $\tilde\rK^\tp(T) \ne \rK^\tp(T)$. 

\end{remar}

\ \ 

\begin{proof}[Proof of Proposition~\ref{Kstneg}]

For every regular classical commutative $\bbC$-algebra $D$, if $B \ra A$ is a nil-extension, $B\otimes D \ra A\otimes D$ is a nil-extension as well. Also Noetherian $\bbC$-algebras of finite global dimension have no negative K-theory, so $\tilde\rK(A\otimes D) \simeq \rK(A\otimes D)$ under these assumptions. Using nilinvariance of non-positive algebraic K-theory groups Proposition~\ref{Kneg} and Corollary~\ref{Kstformula}, we get equivalences:

$$
    \rK^\st(B) \simeq |[n] \mapsto \colim_{\Delta^n_\tp \ra D^\an}\rK(B\otimes_\bC D)| \simeq |[n] \mapsto \colim_{\Delta^n_\tp \ra D^\an}\tilde\rK(B\otimes_\bC D)| \simeq \tilde\rK^\st(B) 
$$

and the equality 

$$
    \rK^\tp(B) \simeq \tilde\rK^\tp(B) 
$$

follows. 

\end{proof}

Using Proposition~\ref{lccnprop} (and nil-invariance of periodic cyclic homology, in order to get a simpler answer), one can also write formulas expressing HP in terms of the realization of the stack of perfect (or projective) modules, analogous to \cite{Bla}, section 4.7 in this context.

\ \ 

We also want to mention that Proposition 4.37 \cite{Bla} can be generalized in another way.

\begin{prop}\label{lcfddg}

Let $B$ be a smooth $\bbC$-dg-algebra, which is finite dimensional, i.e.  $B$ is Morita-equivalent to a $\bbC$-dg-algebra $A$, such that after forgetting the multiplication, the differential and the grading on $A$ one gets a finite-dimensional $\bbC$-vector space: $\mathrm{dim}(\bigoplus_{i\in \bbZ}A_i) < \infty$. Then the complexified topological Chern character $\Ch^\tp: \rK^\tp(B)\otimes \bbC \ra \HP(B /\bbC)$ is an equivalence.

\end{prop}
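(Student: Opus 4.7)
The plan is to reduce the statement to the already-established case of $\Perf(\bbC)$ by decomposing $\Perf(B)$ into simple pieces, and then to use the good stability properties of the class $LC$.

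The key input is Orlov's Theorem 2.19 in \cite{Orl}, which asserts that every smooth finite-dimensional $\bbC$-dg-algebra $A$ is Morita-equivalent to a finite iterated gluing of copies of the base field $\bbC$; equivalently, $\Perf(A)$ admits a finite semi-orthogonal decomposition whose components are each equivalent to $\Perf(\bbC)$. Since $LC$ is closed under Morita-equivalences by the last sentence of Theorem~\ref{lcall}, we may replace $B$ by such an $A$ and reduce to showing that $\Perf(A)$ lies in $LC$.

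Next, I would verify that each semi-orthogonal summand lies in $LC$. For the trivial piece $\Perf(\bbC)$ this is just Proposition~\ref{lcsch} applied to $X = \Spec\bbC$, where the complexified topological Chern character agrees with the classical topological Chern character for a point and is an equivalence. Then I would proceed by induction on the length $n$ of the semi-orthogonal decomposition $\Perf(A) = \langle \Perf(\bbC), \dots, \Perf(\bbC)\rangle$. At each step the semi-orthogonal decomposition gives an exact triangle of dg-categories whose left and right terms are (by induction) in $LC$, and the 2-out-of-3 property of $LC$ with respect to exact triples of dg-categories, stated in Theorem~\ref{lcall}, forces the middle term to lie in $LC$ as well.

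The main thing to check carefully is that the stability properties of $LC$ listed in Theorem~\ref{lcall} are indeed enough to carry the induction through; in particular, that Orlov's decomposition really is a finite sequence of exact triangles of triangulated dg-categories, so that the 2-out-of-3 property applies at each stage. Once this is in place, the induction terminates after finitely many steps and we conclude that $\Perf(B) \in LC$, i.e.\ the complexified topological Chern character $\Ch^\tp: \rK^\tp(B)\otimes\bbC \ra \HP(B/\bbC)$ is an equivalence.
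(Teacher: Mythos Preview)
Your overall strategy---reduce to $\Perf(\bbC)$ via Orlov's result and then invoke the stability properties of $LC$---is the same as the paper's, but you have misstated what Orlov's Theorem~2.19 actually provides, and this creates a genuine gap.

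Orlov does \emph{not} show that a smooth finite-dimensional dg-algebra $A$ has $\Perf(A)$ equal to an iterated gluing of copies of $\Perf(\bbC)$. What he proves (for \emph{any} finite-dimensional dg-algebra, smooth or not) is that there is a fully faithful embedding $\Perf(A)\hookrightarrow\Perf(\cE)$ into the perfect complexes over an auxiliary dg-algebra $\cE$, and it is $\Perf(\cE)$, not $\Perf(A)$, that carries the full exceptional collection. In general an admissible subcategory of a category with a full exceptional collection need not itself admit one, so your induction on a semi-orthogonal decomposition of $\Perf(A)$ is not available from this input.

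The paper's argument fixes exactly this: your $2$-out-of-$3$ induction does apply to $\Perf(\cE)$ and shows $\Perf(\cE)\in LC$. Smoothness of $A$ is then used to guarantee that the embedding $\Perf(A)\hookrightarrow\Perf(\cE)$ is \emph{admissible}, so that on any weakly localizing invariant the induced map is a retract inclusion. One then concludes via the closure of $LC$ under retracts (also listed in Theorem~\ref{lcall}), not via $2$-out-of-$3$. If you replace your claimed decomposition of $\Perf(A)$ by the embedding into $\Perf(\cE)$ and swap the final step from $2$-out-of-$3$ to the retract property, your proof becomes correct and coincides with the paper's.
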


Note that the property of being finite-dimensional is stronger than properness: dg-algebras are rarely formal (or have a finite-dimensional model). In particular, by \cite{Orl}, Corollary 2.21, $\rK_0(T) \simeq \bbZ^{\oplus n}$ for $T$ Morita-equivalent to a smooth finite dimensional $B$, but even for a proper smooth variety $X$ $\rK_0(X)$ is usually not finitely generated.

\begin{proof}

In a recent paper, Orlov generalizes the Auslander construction and shows that, for a finite dimensional $\bbC$-dg-algebra $A$, there is a fully faithful embedding of $\Perf(A)$ into the category of perfect complexes over a dg-algebra $\cE$ constructed from $A$ such that the category $\Perf(\cE)$ comes with a full exceptional collection (\cite{Orl}, Theorem 2.19). 

The equivalence $\rK^\tp(\cE)\otimes \bbC \simeq \HP(\cE /\bbC)$ follows from existence of a full exceptional collection on $\cE$ and the case of $\Spec\bbC$, since both invariants are weakly localizing. Since we assumed that $A/\bbC$ is smooth, the embedding $\Perf(A) \lra \Perf(\cE)$ is admissible, hence, by functoriality, the equivalence $\rK^\tp(A)\otimes \bbC \simeq \HP(A /\bbC)$ follows from the equivalence $\rK^\tp(\cE)\otimes \bbC \simeq \HP(\cE /\bbC)$, since the former is a retract in the latter.

\end{proof}

\ \ 

\subsection{Derived schemes}

We can use our nilinvariance result to generalize \cite{Bla}, Proposition 4.32 to the derived setting. Restricted to the class of commutative dg-algebras, Theorem~\ref{Kstnil} states that semi-topological K-theory of a derived affine scheme $\fX/\bbC$ only depends on the corresponding reduced classical subscheme $X/\bbC$. This observation stays true globally.

\begin{prop}\label{Kstnilsch}

Let $\fX$ be a quasi-compact quasi-separated derived scheme over $\bbC$. 
Denote $X := (\pi_0\fX)^{\text{red}} \ra \fX$ the reduction of the classical truncation of $\fX$. Let $F$ be a nil-invariant weakly localizing invariant. Then the corresponding morphism $F(\fX) \ra F(X)$ is an equivalence. 

\end{prop}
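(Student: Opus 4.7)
The plan is to factor the map $X \hookrightarrow \fX$ as the composition
\[
    X = (\pi_0\fX)^{\text{red}} \hookrightarrow \pi_0\fX \hookrightarrow \fX
\]
of two closed immersions, to reduce to the affine case via Zariski descent for $F$, and to apply the nil-invariance hypothesis to each of the two factors.

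For the descent step, I would use that a weakly localizing invariant satisfies Mayer--Vietoris for Zariski covers of qcqs derived schemes. Concretely, for any quasi-compact open $\fU \subset \fX$ with closed complement $\fZ$, the open-closed decomposition yields an exact triangle of dg-categories $\Perf_\fZ(\fX) \to \Perf(\fX) \to \Perf(\fU)$, which $F$ turns into a fiber sequence. A finite induction on the size of an affine cover, guaranteed by the qcqs hypothesis, combined with the fact that $\pi_0(-)$ and $(-)^{\text{red}}$ commute with open immersions so that the same cover descends to $\pi_0\fX$ and to $X$, reduces the proposition to the case $\fX = \Spec A$ affine.

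In the affine case, the natural map $A \to \pi_0 A$ of connective commutative $\bbC$-dg-algebras has fiber $\tau_{\ge 1}A$ with trivial $\pi_0$, so it is a nilpotent extension in the sense of Section 5 and $F$ sends it to an equivalence by assumption. The subsequent map $\pi_0 A \to (\pi_0 A)^{\text{red}}$ is a classical nilpotent extension, its kernel being the nilradical, which is nilpotent under the Noetherian hypotheses typically present on the components of a finite-type affine cover (and in any event can be expressed as a filtered colimit of nilpotent thickenings). Applying nil-invariance a second time yields $F(\fX) \simeq F(\pi_0\fX) \simeq F(X)$.

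The step I expect to require the most care is the Zariski descent, which rests on the Thomason-style localization triangle $\Perf_\fZ(\fX) \to \Perf(\fX) \to \Perf(\fU)$ for derived qcqs schemes; this is essentially Thomason--Trobaugh excision transplanted to the derived setting. Once in hand, the rest of the argument is formal: the two instances of nil-invariance do all the remaining work.
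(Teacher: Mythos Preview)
Your approach is essentially the paper's: reduce to the affine case via derived Zariski Mayer--Vietoris (the paper states this as a separate proposition, citing the localization sequence $\Perf_Z(\fX)\to\Perf(\fX)\to\Perf(\fU)$ and the excision $\Perf_Z(\fX)\simeq\Perf_Z(\fU')$ from \cite{CMNN}, Appendix~A), then invoke nil-invariance on affines. Your explicit two-step factorization through $\pi_0\fX$ is a helpful elaboration of what the paper leaves implicit.

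One caution: your parenthetical fallback ``in any event can be expressed as a filtered colimit of nilpotent thickenings'' does not rescue the non-Noetherian case here. By the paper's conventions a \emph{weakly} localizing invariant is not assumed to commute with filtered colimits (indeed $\HP$ and $\HC^-$ do not), so you cannot pass a filtered colimit of square-zero thickenings through $F$. In the applications the classical part is of finite type, hence Noetherian, and the nilradical is genuinely nilpotent; outside that regime the statement as written needs an additional hypothesis, and your filtered-colimit remark should be dropped rather than relied upon.
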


Applying this proposition to $F:= \rK^\st$ and $F: = \HP(\cdot /\bbC)$ together with Proposition~\ref{lcsch}, we get the following corollary.

\begin{coro}\label{lcdsch}

Let $\fX/\bbC$ be a derived scheme, such that its classical part $\pi_0\fX$ is a separated scheme of finite type (this includes laft schemes of \cite{GR}).
Then there are canonical equivalences: 

$$
\rK^\tp(\Perf(\fX)) \simeq \rK^\tp(\Perf(\pi_0\fX)) \simeq KU((\pi_0\fX)^\han),
$$

$$
\rK^\tp(\Perf(\fX))\otimes \bbC \simeq \HP(\Perf(\fX)) \simeq \HP(\Perf(\pi_0\fX)).  
$$

\end{coro}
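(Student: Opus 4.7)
The plan is to deduce Corollary~\ref{lcdsch} by packaging three previously-established inputs: Proposition~\ref{Kstnilsch} (global derived nil-invariance), Proposition~\ref{lcsch} (Blanc's classical comparison for separated schemes of finite type), and the nil-invariances of Theorems~\ref{Kstnil} and~\ref{GHP}.

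First I would verify that both $F_1 := \rK^\tp$ and $F_2 := \HP(\cdot/\bbC)$ satisfy the hypotheses of Proposition~\ref{Kstnilsch}. Each is weakly localizing on $dgCat_\bbC$ (for $\HP$ this is explicitly recalled in the conventions), and each is nil-invariant on connective $\bbC$-dg-algebras: $\rK^\st$ by Theorem~\ref{Kstnil}, hence also $\rK^\tp = \rK^\st[\beta^{-1}]$, and $\HP(\cdot/\bbC)$ by Theorem~\ref{GHP}. Since $\pi_0\fX$ is separated of finite type, $\fX$ is quasi-compact quasi-separated, so Proposition~\ref{Kstnilsch} applies. Setting $X := (\pi_0\fX)^{\red}$ and applying Proposition~\ref{Kstnilsch} once to $\fX$ and once to $\pi_0\fX$ (viewed as a derived scheme with the same reduction $X$), I obtain natural equivalences
\[
F_i(\Perf(\fX)) \xleftarrow{\sim} F_i(\Perf(X)) \xrightarrow{\sim} F_i(\Perf(\pi_0\fX)) \qquad (i = 1, 2),
\]
and in particular $F_i(\Perf(\fX)) \simeq F_i(\Perf(\pi_0\fX))$.

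The first chain in the corollary is then completed by invoking Proposition~\ref{lcsch} for the separated $\bbC$-scheme of finite type $\pi_0\fX$, which yields $\rK^\tp(\Perf(\pi_0\fX)) \simeq KU((\pi_0\fX)^\han)$. For the second chain, the complexified topological Chern character $\Ch^\tp\otimes\bbC$ is a natural transformation between the two weakly localizing nil-invariants $F_1\otimes\bbC$ and $F_2$, so the zig-zags above intertwine it; it therefore suffices to check that $\rK^\tp(\Perf(\pi_0\fX))\otimes\bbC \simeq \HP(\Perf(\pi_0\fX))$, which is once more Proposition~\ref{lcsch} (this is the content of Blanc's lattice conjecture in the classical case).

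Given that Proposition~\ref{Kstnilsch} is taken for granted, there is essentially no obstacle: the corollary is a packaging statement. The only point to be careful about is the compatibility of the chosen equivalences $F_i(\Perf(\fX))\simeq F_i(\Perf(\pi_0\fX))$ with $\Ch^\tp$, which follows from the fact that both equivalences are constructed by the same zig-zag through $F_i(\Perf(X))$ and that Proposition~\ref{Kstnilsch} is natural in the weakly localizing nil-invariant $F$.
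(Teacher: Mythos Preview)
Your proposal is correct and follows the paper's own approach: apply Proposition~\ref{Kstnilsch} to the nil-invariant weakly localizing invariants $\rK^\st$ (or equivalently $\rK^\tp$) and $\HP(\cdot/\bbC)$, then invoke Proposition~\ref{lcsch} for the classical truncation. The only cosmetic slip is the direction of the arrows in your zig-zag---the natural maps go $F_i(\Perf(\fX)) \to F_i(\Perf(\pi_0\fX)) \to F_i(\Perf(X))$, so no zig-zag is actually needed---but this does not affect the argument.
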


\ \ 

As the proof of Proposition~\ref{Kstnilsch} below suggests, this equivalences can be understood by covering $\fX$ by affines and proceeding inductively. The reason behind the finiteness assumption is that, unlike (semi-)topological K-theory, periodic cyclic homology does not commute with filtered colimits.

\ \ 

\begin{proof}[Proof of Proposition~\ref{Kstnilsch}]

The basic idea is to prove that weakly localizing invariants satisfy `derived Zariski descent', by mimicking the standard argument from \cite{TT}. 
In the qcqs case, this allows us to reduce the question to derived affine case, which is %served
secured by Theorem~\ref{Kstnil} (and in the case of $\pi_0\fX$ separated of finite type allows to reduce to derived affine schemes with finite type classical part where one has comparison between two versions of topological K-theory and periodic cyclic homology).

So the statement follows from the following observation (basically, contained in \cite{CMNN}, Appendix A).

\begin{prop}\label{derZar}

Let $\fX$ be a qcqs derived scheme together with two open quasicompact subschemes $\fU_1\subset \fX \supset \fU_2$ and let $F$ be a weakly localizing invariant. Then the triple

$$
    F(\fX) \ra F(\fU_1)\oplus F(\fU_2) \ra F(\fU_1\times_{\fX}\fU_2)
$$

is a fiber sequence.

\end{prop}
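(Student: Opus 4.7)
The plan is to run a Mayer-Vietoris argument in the style of Thomason-Trobaugh, adapted to derived schemes as in \cite{CMNN}, Appendix~A. The statement implicitly assumes $\fU_1\cup\fU_2=\fX$ (otherwise the left-hand term should be replaced by $F$ of the union); I will assume this throughout. First I would set $Z\subseteq|\fX|$ to be the closed complement of $|\fU_1|$, regarded as a closed subset of the underlying topological space, and write $\Perf_Z(\fX)\subseteq\Perf(\fX)$ for the full dg-subcategory of perfect complexes set-theoretically supported on $Z$; analogously for $\Perf_Z(\fU_2)$.

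The first key input is a derived Thomason-Trobaugh localization: for a qcqs derived scheme $\fY$ with quasicompact open $\fV\subseteq\fY$ and closed complement $Z$, the sequence
$$
\Perf_Z(\fY)\lra\Perf(\fY)\lra\Perf(\fV)
$$
is an exact triple in $dgCat_\bbC$. The non-formal content is that restriction induces an equivalence from the idempotent-completed Verdier quotient onto $\Perf(\fV)$, which in turn rests on the existence of compactly supported perfect extensions; the derived version of this statement is provided by \cite{CMNN}, Appendix~A. I would apply it to the pairs $(\fX,\fU_1)$ and $(\fU_2,\fU_1\times_\fX\fU_2)$, obtaining two exact triples of dg-categories.

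The second key input is excision: the assumption $\fU_1\cup\fU_2=\fX$ forces $Z\subseteq|\fU_2|$, and the restriction functor $\Perf_Z(\fX)\lra\Perf_Z(\fU_2)$ is then an equivalence, since perfect complexes with set-theoretic support on a closed subset are determined by their restriction to any open neighborhood of that subset. This is the derived analogue of the classical Thomason-Trobaugh excision, again available from the same appendix.

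With both inputs in hand, I would apply the weakly localizing invariant $F$ to the two exact triples; by definition, this yields two fiber sequences in $\cC$, and excision identifies their leftmost terms via an equivalence. The resulting map of fiber sequences has an equivalence on fibers, which forces the square
$$
\xymatrix{
F(\fX)\ar[r]\ar[d] & F(\fU_2)\ar[d] \\
F(\fU_1)\ar[r]     & F(\fU_1\times_\fX\fU_2)
}
$$
to be a pullback in the stable $\infty$-category $\cC$; since $\cC$ is stable, this pullback condition is equivalent to the fiber sequence in the statement. The one genuine obstacle is the derived Thomason-Trobaugh localization and excision, which I would import from \cite{CMNN} rather than reprove; everything else is formal manipulation of fiber sequences in a stable $\infty$-category.
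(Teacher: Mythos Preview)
Your proposal is correct and follows essentially the same route as the paper: both invoke the derived Thomason--Trobaugh localization sequence $\Perf_Z(\fX)\to\Perf(\fX)\to\Perf(\fU_1)$ and the excision equivalence $\Perf_Z(\fX)\simeq\Perf_Z(\fU_2)$ from \cite{CMNN}, Appendix~A (the paper cites Corollary~A.10 and Proposition~A.14 specifically), then apply $F$ and identify the fibers. Your additional remark that the statement tacitly assumes $\fU_1\cup\fU_2=\fX$ is well taken.
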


\begin{proof}[Proof of Proposition~\ref{derZar}]

The statement follows from two facts. One is the existence of an exact triple of triangulated dg-categories

$$
    \Perf_Z(\fX) \lra \Perf(\fX) \lra \Perf(\fU_1)  
$$

for $j_1: \fU_1 \subset \fX$ as before, $Z$ a closed complement in the underlying topological space of $\fX$ to $\fU_1$ and $\Perf_Z(\fX)$ the sub-dg-category of perfect complexes supported on $Z$, i.e., such $\cF\in \Perf(\fX)$ that $j^*\cF = 0$ (\cite{CMNN}, Corollary A.10).
The other observation is the independence of $\Perf_Z$ of the scheme $Z$ is contained in: $j_2^*: \Perf_Z(\fX) \ra \Perf_Z(\fU_2)$ is an equivalence (\cite{CMNN}, Proposition A.14). This allows us to identify the fibers of $F(\fX) \ra F(\fU_1)$ and $F(\fU_2) \ra F(\fU_1\times_{\fX}\fU_2)$, finishing the proof.

\end{proof}

\end{proof}

\ \ 

Corollary~\ref{lcdsch} can be formulated slightly more generally.

\begin{coro}

Let $\fY/\bbC$ be a derived algebraic space, together with a surjective etale map $\fX \ra \fY$, such that $\fX$ is as in Corollary~\ref{lcdsch}. 
Then the topological Chern character

$$
\rK^\tp(\Perf(\fY))\otimes \bbC \ra \HP(\Perf(\fY)).  
$$

is an equivalence.

\end{coro}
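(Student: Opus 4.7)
The plan is to deduce the corollary from Corollary~\ref{lcdsch} by etale descent applied to the Cech nerve of $\fX \to \fY$. Form the simplicial derived algebraic space $C^\bullet$ with $C^n := \fX \times_\fY \cdots \times_\fY \fX$ ($n+1$ factors). Since $\fY$ is a derived algebraic space, its diagonal is schematic, so each $C^n$ is a derived scheme, and since the etale property is closed under base change and composition, each projection $C^n \to \fX$ is etale. Consequently $\pi_0 C^n$ is etale over the separated finite-type scheme $\pi_0\fX$, hence itself separated of finite type. Each $C^n$ therefore satisfies the hypotheses of Corollary~\ref{lcdsch}, and the complexified Chern character $\rK^\tp(\Perf(C^n))\otimes \bbC \to \HP(\Perf(C^n))$ is an equivalence for every $n$.

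The next step is to invoke etale (hyper)descent for both sides of the Chern character map. The functor $\HP(\Perf(\cdot))$ satisfies flat descent on qcqs derived algebraic spaces, giving $\HP(\Perf(\fY)) \simeq \Tot \HP(\Perf(C^\bullet))$. Topological K-theory inherits etale descent from the analogous descent property for $\Perf$ on derived algebraic spaces combined with the fact that the realization formalism of Section 2 commutes with the limits involved, yielding $\rK^\tp(\Perf(\fY)) \simeq \Tot \rK^\tp(\Perf(C^\bullet))$. Tensoring with $\bbC$ commutes with this totalization because the Cech nerve of a finite etale cover has bounded cohomological amplitude on a qcqs algebraic space, so the spectral sequence computing $\Tot$ degenerates in a controlled range.

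Putting everything together, the complexified Chern character is a natural transformation of functors from derived algebraic spaces to $\HP(\bbC)$-modules, and applying $\Tot$ termwise to the equivalences supplied by Corollary~\ref{lcdsch} on $C^\bullet$ yields the desired equivalence on $\fY$.

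The main obstacle is the etale hyperdescent for $\rK^\tp(\Perf(\cdot))\otimes \bbC$ on derived algebraic spaces: while $\Perf$ itself satisfies etale descent and $\rK^\tp$ is weakly localizing, promoting this to descent of the $\bbC$-linear Chern character on all of $\fY$ requires care about the interaction of rationalization with the cosimplicial totalization. A clean way around this is to note that the cofiber $F(\cdot) := \mathrm{cofib}(\rK^\tp(\Perf(\cdot))\otimes\bbC \to \HP(\Perf(\cdot)))$ is itself a weakly localizing $\bbC$-linear invariant that vanishes on every $C^n$, and conclude by descent of $F$ alone rather than of each factor separately.
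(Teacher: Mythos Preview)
Your overall strategy---form the \v{C}ech nerve, verify that each $C^n$ satisfies the hypotheses of Corollary~\ref{lcdsch}, and then descend---is exactly the paper's approach. Your final paragraph also lands on the right object to work with: the cofiber $F$ of the complexified Chern character, which is a weakly localizing invariant valued in $\bbQ$- (indeed $\bbC$-) modules and vanishes on each $C^n$.

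What is missing is the actual justification of \'etale descent for $F$ (or equivalently for each of $\rK^\tp\otimes\bbC$ and $\HP$). Your two attempts at this are not correct as stated. First, ``the realization formalism of Section~2 commutes with the limits involved'' is not something you have: $Re$ is a left adjoint and commutes with colimits, not with the cosimplicial totalization appearing in \v{C}ech descent. Second, the claim that ``the \v{C}ech nerve of a finite \'etale cover has bounded cohomological amplitude'' is irrelevant, because the hypothesis is only that $\fX\to\fY$ is surjective \'etale, not finite \'etale; for a general \'etale surjection there is no such boundedness, and the interchange of $-\otimes\bbC$ with $\mathrm{Tot}$ cannot be argued this way.

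The paper closes this gap by invoking a single external input: Theorem~A.4 of \cite{CMNN}, which says that any weakly localizing invariant taking values in $\bbQ$-modules is an \'etale (hyper)sheaf on qcqs derived schemes/algebraic spaces. Applying this to $F$ (or to both $\rK^\tp\otimes\bbC$ and $\HP$) makes the Chern character a map of \'etale sheaves, so the \v{C}ech totalization argument goes through without any of the ad hoc finiteness or limit-commutation claims. Once you cite this theorem, your proof is complete and coincides with the paper's.
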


\begin{proof}

By Theorem A.4 \cite{CMNN}, the topological Chern character, being a map of weakly localizing invariants valued in $\bbQ$-modules, is a map of etale sheaves, so we can compute both invariants by Cech diagrams, which by Corollary~\ref{lcdsch} are equivalent. 

\end{proof}

This minor generalization comes for free, but we also are optimistic about lattice conjecture for stacky quotients of derived schemes.

\ \

\subsection{Group algebras}

Let $G$ be a group. We are interested in understanding the topological Chern character map for the corresponding group algebra $\bbC[G]$, at least after complexification. In the next subsection, this will be applied to prove the lattice conjecture for $\infty$-categories of local systems on reasonable topological spaces.

\ \ 

We start by considering two simple classes of groups for which all the necessary work has already been done in previous sections.

\begin{prop}\label{lcfabgp}

Let $G$ be a finite group or a finitely generated abelian group. Then the topological Chern character induces an equivalence 

\[
\rK^\tp(\bbC[G])\otimes\bbC\simeq \HP(\bbC[G]).
\]

\end{prop}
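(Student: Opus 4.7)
The plan is to reduce each case to a lattice conjecture result already proved earlier in the paper, after decomposing $\bbC[G]$ into something simpler up to Morita equivalence. Throughout, I will freely use that both $\rK^\tp(\cdot)\otimes\bbC$ and $\HP(\cdot/\bbC)$ are weakly localizing, hence additive on finite products of triangulated dg-categories; in particular, the lattice conjecture descends to a factor and ascends from factors to a finite product.

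For a finite group $G$, the algebra $\bbC[G]$ is finite-dimensional, hence a connective proper $\bbC$-dg-algebra concentrated in degree zero. Theorem \ref{lccnprop} therefore applies directly and gives the equivalence. Alternatively, one can proceed by hand: Maschke's theorem makes $\bbC[G]$ semisimple, so Wedderburn produces a Morita equivalence $\bbC[G]\sim\bbC^r$ with $r$ the number of conjugacy classes, and the claim reduces to the lattice conjecture for $\Spec\bbC$ (a very special case of Proposition \ref{lcsch}) together with the additivity noted above.

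For a finitely generated abelian group $G$, the structure theorem furnishes an isomorphism $G\cong\bbZ^n\oplus F$ with $F$ finite abelian, giving $\bbC[G]\cong\bbC[\bbZ^n]\otimes_\bbC\bbC[F]$. Applying the finite case to $F$ (equivalently, using characters of $F$) yields a Morita equivalence $\bbC[F]\sim\bbC^{|F|}$, and tensoring with $\bbC[\bbZ^n]$ we obtain $\Perf(\bbC[G])\sim\Perf(\bbC[\bbZ^n])^{|F|}$ in $dgCat_\bbC$. Now $\bbC[\bbZ^n]$ is the coordinate ring of the algebraic torus $(\bbG_m)^n$, a smooth separated $\bbC$-scheme of finite type, so Proposition \ref{lcsch} establishes the lattice conjecture for $\Perf(\bbC[\bbZ^n])$. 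Additivity then promotes this to the $|F|$-fold product and concludes the proof.

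There is no serious obstacle here: the only content is the structure theorem for finitely generated abelian groups and Blanc's comparison for schemes, together with the formal fact that the topological Chern character is a natural transformation of weakly localizing invariants and hence compatible with the Morita decompositions used above.
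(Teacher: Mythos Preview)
Your argument is correct and matches the paper's own proof: for finite $G$ the paper invokes Theorem~\ref{lccnprop}, and for finitely generated abelian $G$ it invokes Proposition~\ref{lcsch}. The only difference is cosmetic: in the abelian case the paper applies Proposition~\ref{lcsch} directly to $\Spec\bbC[G]$ (a separated $\bbC$-scheme of finite type, since $\bbC[G]$ is a finitely generated commutative $\bbC$-algebra), whereas you first split off the torsion part and then apply Proposition~\ref{lcsch} to the torus factor --- but this is the same reduction, just made explicit.
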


\begin{proof}

In the finite group case the statement follows from Theorem~\ref{lccnprop}; in the finitely generated abelian group case the equivalence follows from Proposition~\ref{lcsch} .

\end{proof}

\begin{remar}

Of course, in these cases one can always write down an explicit formula for semi-topological K-theory: the semi-topological K-theory of $\bbC[\bbZ^{\oplus n}]$ is just the connective part of the topological K-theory of $n$-torus, adding cyclic groups corresponds to taking several copies of it; in the case of a finite group, one can reduce to the semi-topological K-theory of several copies of $\bbC$: $\rK^\st(\bbC[G])\simeq ku^{\oplus m}$.

\end{remar}

\ \ 

For a general group, one cannot expect such a result to be true. 
Nevertheless, the lattice conjecture can be proved for a surprisingly large class of groups via reducing to their finite subgroups.

\ \ 

In \cite{Bur}, it was shown that the target of the map

$$
    \rK^\tp(\bbC[G])\otimes \bbC \ra \HP(\bbC[G])
$$
can always be understood via homology of the cyclic subgroups of $G$, namely the isomorphism 

\begin{equation} \label{HPcalc}
\xymatrix{
\HP_\ast(\bbC [G]) \simeq \bigoplus_{[g] \in \langle G \rangle^{fin}} H_{[\ast]}(C_g;\bbC) \oplus \bigoplus_{[g] \in \langle G \rangle^{\infty}} T^G_\ast(g;\bbC),
}
\end{equation}
was proved, where $\langle G \rangle^{fin}$ / $\langle G \rangle^{\infty}$ stand for the conjugacy classes of the finite / infinite order elements in $G$,  $C_g$ is the centralizer of $g\in G$, $H_{[*]}(C_g;\bbC) := \prod_{i \in \bbZ} H_{*+2i}(C_g;\bbC)$, $T^G_*(g;\bbC):= \varprojlim \big( \cdots \xrightarrow{S} H_{\ast+2n}(C_g/\langle g\rangle;\bbC) \xrightarrow{S} H_{\ast+2n-2}(C_g/\langle g\rangle;\bbC) \xrightarrow{S} \cdots \big)$ and $S$ are Gysin maps.

As we remind below, the image of the topological Chern character projects as zero on the second term in the formula above, hence vanishing of this term is a necessary condition for  the lattice conjecture
to be true for $\bbC[G]$. The condition of such vanishing for a group $G$ is sometimes called (generalized) Burghelea conjecture. The \cite{EM} paper is a good source on the current state of the Burghelea conjecture, in particular it has been shown there that the conjecture holds for 
a large class of groups.
 
\begin{remar}
 
In Section 5, \cite{EM}, there were also given counterexamples to the Burghelea conjecture with relatively good finiteness properties. Since the topological Chern character map knows nothing about $T^G$ term, those group algebras provide (very non-proper) counterexamples to the lattice conjecture. Note also that Burghelea did not in any way expect vanishing of $T^G$-term  for all groups: he only asked if it is true for the groups admitting a finite classifying space and he also gave a (not finitely generated) example of a group where the $T^G$-term is nonzero.
 
\end{remar}

\ \ 

The left hand side  of the map 

$$
   \rK^\tp(\bbC[G])\otimes \bbC \ra \HP(\bbC[G])
$$
is built via colimits from the spectra  of the form $\rK(R[G])\otimes \bbC$ where $R$ runs through the category of finitely generated regular $\bbC$-algebras. 
K-theory of group algebras has been studied a lot for the last decades in relation to the Farrell-Jones's conjecture. Rationally for $\bbC$-algebras this conjecture becomes similar to the Burghelea's computation: roughly speaking it states that the K-theory of $\bbC[G]$ can be computed in terms of the K-theories of $\bbC[H]$ for finite cyclic subgroups $H\subset G$.

In \cite{LR06}, the authors show that an additive invariant $I$ can be promoted to an $\Or G$-specrum $\bI = \bI_R$ (i.e. a functor from the subcategory of $G$-sets of the form $G/H$ to spectra), such that $\bI_R(G/H) \simeq I(R[H])$. Then one can left Kan extend an $\Or G$-spectrum $\bI$ to a functor $(-)\wedge_{\Or G}\bI \in \Fun(\mathrm{Top}^G, \Sp)$ -- homotopy groups of its values are usually denoted by $\rH^G_*(-,\bI)$. Restricting to $R=\bbC$, applying this construction to the Chern character and substituting the map of $G$-spaces $E_\cF G\ra G/G = pt$, one obtains a commutative square:

$$
\xymatrix{
E_\cF G\wedge_{\Or G}\bK(\bbC[-]) \ar[r] \ar[d]& G/G\wedge_{\Or G}\bK(\bbC[-]) \simeq \rK(\bbC[G]) \ar[d] \\
E_\cF G\wedge_{\Or G}\bHP(\bbC[-]) \ar[r]       & G/G\wedge_{\Or G}\bHP(\bbC[-])\simeq \HP(\bbC[G])      . \\
} 
$$

Here $E_\cF G\in \mathrm{Top}^G$ is the classifying space for a family $\cF$ of subgroups in $G$, which can be characterized by requiring its subspace of $H$-invariants $(E_\cF G)^H$ to be contractible if $H\in \cF$, and empty otherwise. It follows that the usual colimit diagram for the left Kan extension $E_\cF G\wedge_{\Or G}\bI$ does not have $\bI(G/H) \simeq I(R[H])$-terms for $H\notin \cF$, and for $\cF = \langle(e)\rangle$ $E_\cF G\wedge_{\Or G}\bI\simeq \rB G_+\wedge I(R)$.

We refer to \cite{LR06} for details and to \cite{RV}, \cite{LR05}, \cite{Lueck}
for more information on the Farrell-Jones conjecture and relevant equivariant homotopy theory; in the case of a finite type regular $\bbC$-algebra $R$ and a group $G$, the rational Farrell-Jones conjecture states that for a family of finite cyclic groups $\cF = FCyc_G=:FCyc$, the assembly map $E_\cF G\wedge_{\Or G}\bK(R[-])_\bbQ 
\ra G/G\wedge_{\Or G}\bK(R[-])_\bbQ \simeq \rK(R[G])_\bbQ$ is an equivalence (here Propositions 2.14 and 2.20 of \cite{LR05} are used to reduce to $FCyc$ instead of $VCyc$). As before, we use that the realization, Bott-inverting and complexification functors commute with colimits to get a commutative square:

\begin{equation}\label{FJKHP}
\xymatrix{
E_{FCyc}(G)\wedge_{\Or G}\bK^\tp(\bbC[-]) \ar[r] \ar[d]& G/G\wedge_{\Or G}\bK^\tp(\bbC[-]) \simeq \rK^\tp(\bbC[G]) \ar[d] \\
E_{FCyc}(G)\wedge_{\Or G}\bHP(\bbC[-]) \ar[r]       & G/G\wedge_{\Or G}\bHP(\bbC[-])\simeq \HP(\bbC[G])      ; \\
}     
\end{equation}
whenever the Farrel-Jones conjecture is true for $G$, the top map is an equivalence of $\bbC[\beta]$-modules. The bottom map is another way to write the embedding of the first term in (\ref{HPcalc}), in particular, it is an equivalence iff the Burghelea conjecture is satisfied by $G$. Now we formulate the main result of this subsection.

\begin{prop}\label{lcgpalg}

The complexified topological Chern character is an equivalence for $\bbC[G]$ if the group $G$ belongs to one of the following classes:

(i) hyperbolic groups;

(ii) finite-dimensional CAT(0) groups; %acting on a fin-dim CAT(0) space; the condition for Burghelea only is weaker

(iii) mapping class groups of compact orientable surfaces with punctures;

(iv) systolic groups;

(v) compact 3-manifold groups; %FJ is true without compact, but EM ask this 

(vi) Coxeter groups;

(vii) right-angled Artin groups;

(viii) solvable groups of finite Hirsch lenghth.

%(ix) one-relator groups %KH needed here which is alright -- but later

\end{prop}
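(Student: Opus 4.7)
The plan is to chase the commutative square (\ref{FJKHP}) after complexifying the top row. If the left vertical and the two horizontal arrows are equivalences, then the right vertical---which is precisely the complexified topological Chern character $\rK^\tp(\bbC[G])\otimes\bbC \to \HP(\bbC[G])$---must be one as well.

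The left vertical arrow is, by the colimit formula for the left Kan extension defining $E_{FCyc}G\wedge_{\Or G}(-)$, assembled from the complexified Chern character maps $\rK^\tp(\bbC[H])\otimes\bbC \to \HP(\bbC[H])$ indexed by finite cyclic subgroups $H\le G$. Each of these is an equivalence by Proposition~\ref{lcfabgp}, and since both the coend $E_{FCyc}G\wedge_{\Or G}(-)$ and $(-)\otimes\bbC$ preserve colimits of spectra, so is the left column. The top horizontal arrow, after $(-)\otimes_\bbQ\bbC$, is the rational Farrell-Jones assembly map for $\rK^\tp(\bbC[-])$ with $\bbC$-coefficients and family $FCyc$; it is therefore an equivalence whenever $G$ satisfies the rational Farrell-Jones conjecture (one uses Propositions 2.14 and 2.20 of \cite{LR05} to replace $VCyc$ by $FCyc$). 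The bottom horizontal arrow, by the construction of $\bHP(\bbC[-])$ and the explicit formula (\ref{HPcalc}), is the inclusion of the finite-order-conjugacy summand $\bigoplus_{[g]\in \langle G \rangle^{fin}} H_{[*]}(C_g;\bbC)$, and is hence an equivalence exactly when the $T^G_*$-term vanishes, i.e., when $G$ satisfies the Burghelea conjecture.

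Having reduced to two separate inputs, it then suffices to verify for each of the classes (i)--(viii) that both the rational Farrell-Jones conjecture and the Burghelea conjecture are known. The Farrell-Jones side is covered by the literature built around the Bartels-L\"uck program: hyperbolic groups via Bartels-L\"uck-Reich, finite-dimensional CAT(0) groups via Bartels-L\"uck (which also covers Coxeter and right-angled Artin groups, since both act geometrically on finite-dimensional CAT(0) complexes), mapping class groups via Bartels-Bestvina, systolic groups via Osajda and collaborators, compact 3-manifold groups via Roushon and Bartels-Farrell-L\"uck, and solvable groups of finite Hirsch length via Wegner. For the Burghelea conjecture, \cite{EM} is the blanket reference and verifies the conjecture for precisely these (and a few more) classes.

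The main obstacle is not conceptual but combinatorial: the list (i)--(viii) is essentially the currently known intersection of the scopes of the two conjectures, which come from different communities and are proved by rather different techniques. An added technical nuisance is checking the compatibility of the assembly with the Bott-inversion, complexification and realization functors at the level of $\Or G$-spectra, but this is formal once one recognizes that all these operations preserve colimits of spectra. Any future extension of either conjecture to a new class of groups will automatically enlarge the statement of Proposition~\ref{lcgpalg}.
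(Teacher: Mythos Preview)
Your proof is correct and follows essentially the same route as the paper: chase the square~(\ref{FJKHP}), observe that the left vertical map is an equivalence by Proposition~\ref{lcfabgp} applied to finite cyclic $H$, that the top and bottom horizontal maps are equivalences by the Farrell--Jones and Burghelea conjectures respectively for the listed classes of groups, and conclude that the right vertical map is an equivalence. The paper's own proof is terser but identical in structure, citing the same inputs (\cite{BLR}, \cite{BL}, \cite{BB}, \cite{Rou}, \cite{DJ}, \cite{W} for Farrell--Jones and \cite{Ji}, \cite{EM} for Burghelea).
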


\begin{proof}

The Farrell-Jones is true for these classes of groups (\cite{BLR}, \cite{BL}, \cite{BB}, \cite{Rou}, \cite{DJ}, \cite{W}), so the top map is an equivalence. The Burghelea conjecture is true for all these groups (\cite{Ji}, \cite{EM}), so the bottom map is an equivalence. The left map is an equivalence as well, since the complexified topological Chern character becomes an equivalence of functors after restriction to the subcategory of algebras of the form $\bbC[H]$ for $H$ any finite (cyclic) group (Proposition~\ref{lcfabgp}). Therefore the right vertical map is an equivalence.

\end{proof}

\begin{remar}

When the group $G$ has no non-trivial finite cyclic subgroups, i.e. $G$ is torsion-free, the square (\ref{FJKHP}) takes the simpler form

\begin{equation}
\xymatrix{
\rB G_+\wedge\rK^\tp(\bbC) \ar[r] \ar[d]& \rK^\tp(\bbC[G]) \ar[d] \\
\rB G_+\wedge\HP(\bbC) \ar[r]       &  \HP(\bbC[G])      ; \\
}     
\end{equation}
equivariant theory is no longer necessary here.

\end{remar}

\begin{remar}\label{FJspecul}

Let us say that the group $G$ satisfies the semi-topological / topological / rational topological Farrel-Jones conjecture if the assembly map $E_{VCyc}(G)\wedge_{\Or G}\bI\ra I(\bbC[G])$ is an equivalence for the corresponding invariant. Of course, the usual Farrel-Jones conjecture for regular $\bbC$-algebras $\implies FJ^\st \implies FJ^\tp \implies FJ^\tp_\bbQ$. By the proof of Proposition~\ref{lcgpalg}, in the triple $\langle$ the Burghelea conjecture for $G$, the lattice conjecture for $\bbC[G]$, $FJ^\tp_\bbQ$ $\rangle$, any two statements imply the third. This might be helpful, for instance, for constructing a counterexample to the Farrell-Jones conjecture: it is enough to provide a counterexample to the lattice conjecture of a form $\bbC[G]$ where $G$ is a group for which Burghelea's conjecture is secured. Actually, instead of $\bbC[G]$ one can consider the algebra $C_\ast(\Omega M)$ for any $M$, s.t. for $\pi_1M$ Burghelea conjecture is true, see Corollary~\ref{locsysnil} and Remark~\ref{FJlocsys}.

\end{remar}

%\begin{remar}

%Baum-Connes conjecture suggests that in the case of dg-algebras of the form $\bbC[G]$ one might search for another rational structure, the one similar to  

%\end{remar}

\ \ 

\subsection{Local systems}

We finish with one more application of Theorem~\ref{Kstnil}. Nilinvariance of semi-topological K-theory implies that, applied to the $\infty$-category of local systems of chain complexes on a decent space $M$, semi-topological K-theory depends only on the fundamental group $\pi_1M$. 

\ \ 

Let $M$ be a pointed connected topological space which is locally of singular shape in the sense of \cite{LurHA}, Definition A.4.15. Then there is a fully faithful functor $\Spc_{\Sing(M)}\lra \Shv(M,\Spc)$ from Kan fibrations over $\Sing(M)$ to the category of sheaves on $M$ with the essential image the subcategory of locally constant sheaves %$\Loc(M, \Spc)$
(\cite{LurHA}, Theorem A.4.19, see also \cite{Toe}, Theoreme 2.13), so, roughly speaking, the theory of locally constant sheaves on nice enough spaces can be defined on the level of homotopy types.

Now we define the $\infty$-category of local systems of $\bbC$-complexes on a homotopy type. We consider the functor

$$
    \LOC^\ast: \Spc \lra \DGCAT_\bbC^\op
$$
to the category of presentable dg-categories given by $E \mapsto \Fun(E, Mod_\bbC)$ (cf. \cite{LurKM}, Lecture 21), which can also be defined as the left Kan extension from its value on $pt$. The image of the map $f: E_1 \ra E_2$ is given by the precomposition functor $f^\ast$, which has both left and right adjoints $f_!$ and $f_\ast$. The $!$-pushforwards preserve compact objects, so we can define a functor 

$$
\Loc(-,\bbC): \Spc \lra dgCat_\bbC
$$
$$
    E \mapsto (\Fun(E, Mod_\bbC))^c; \ \ f \mapsto f_! .
$$
%which, by precomposing with $M \mapsto \Sing_\bullet(M)$, can be made into a functor from the 1-category of topological spaces $\Loc(\Sing_\bullet(-),\bbC)=:\Loc(-,\bbC)$.
The functor $\Loc(-,\bbC)$ is equivalent to $E \mapsto \Perf(C_\ast(\Omega E))$. This seems to be a folklore statement, which is partially proved in \cite{LurKM}, Lecture 21; we omit the details here, since in the end we are only interested in the categories of the form $\Perf(C_\ast(\Omega E))$, which can be taken as the definition of $\Loc(E,\bbC)$. By precomposing with $M \mapsto \Sing_\bullet(M)$, \ $\Loc(-,\bbC)$ can be made into a functor from the 1-category of topological spaces, which we, by abuse of notation, will again denote $\Loc(-,\bbC)$.

\ \ 

Applying Theorem~\ref{Kstnil}, we get the following corollary.

\begin{coro}\label{locsysnil}

Let $M$ be a pointed connected locally contractible topological space. Then there is a natural equivalence $\rK^\st(\Loc(M,\bbC)) \simeq \rK^\st(\bbC[\pi_1M])$.

\end{coro}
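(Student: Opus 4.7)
The plan is to deduce this as a direct application of Theorem~\ref{Kstnil} to a Postnikov truncation. First I would use the identification $\Loc(M,\bbC) \simeq \Perf(C_\ast(\Omega M))$ stated just before the corollary, together with the Morita-invariance of $\rK^\st$, to rewrite the left-hand side as $\rK^\st(C_\ast(\Omega M))$, where $C_\ast(\Omega M)$ is a connective $\bbC$-dg-algebra. The statement then reduces to producing a natural map of connective $\bbC$-dg-algebras $v_M: C_\ast(\Omega M) \lra \bbC[\pi_1 M]$ and showing that $\rK^\st(v_M)$ is an equivalence.

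For $v_M$ I would take the canonical truncation $C_\ast(\Omega M) \lra \tau_{\le 0}\, C_\ast(\Omega M)$, identifying the target with $\bbC[\pi_0 \Omega M] \simeq \bbC[\pi_1 M]$ since $M$ is connected. This map is natural in $M$ because both the loop-space construction and Postnikov truncation are. The induced map on $\pi_0$ is an isomorphism, so its kernel is the zero ideal, which is trivially nilpotent. Hence $v_M$ is a nilpotent extension of connective $\bbC$-dg-algebras in the sense of the paper, and Theorem~\ref{Kstnil} yields the desired equivalence
\[
\rK^\st(\Loc(M,\bbC)) \simeq \rK^\st(C_\ast(\Omega M)) \xrightarrow{\ \rK^\st(v_M)\ } \rK^\st(\bbC[\pi_1 M]).
\]
Naturality of the equivalence in $M$ is inherited from naturality of $v_M$ and functoriality of $\rK^\st$.

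There is essentially no hard step; the only bookkeeping is to know that $v_M$ can be rigidified as an honest morphism of connective $\bbC$-dg-algebras (rather than merely of $E_1$-$\bbC$-algebras up to homotopy), so that Theorem~\ref{Kstnil} applies on the nose. This is handled by the construction of $\Loc(-,\bbC)$ recalled before the corollary, where $C_\ast(\Omega M)$ is already produced in the form we need and Postnikov truncation preserves the $E_1$-structure. So the corollary is a quick consequence of derived nil-invariance, with the only potential obstacle being this foundational rigidification, which is standard.
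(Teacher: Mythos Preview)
Your proof is correct and is exactly the argument the paper has in mind: the corollary is stated with no proof beyond the phrase ``Applying Theorem~\ref{Kstnil}'', and you have spelled out precisely that application via the truncation map $C_\ast(\Omega M)\to \rH_0(\Omega M;\bbC)\simeq \bbC[\pi_1 M]$, which is a nilpotent extension since it is an isomorphism on $\pi_0$. Your worry about rigidifying $v_M$ is harmless in the paper's $\infty$-categorical setup, where connective $\bbC$-dg-algebras and connective $E_1$-$\rH\bbC$-algebras are interchangeable and Postnikov truncation is functorial on them.
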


In particular, when the fundamental group of $M$ is nice enough, we deduce the lattice conjecture for the category of local systems.

\begin{theo}\label{lclocsys}

Let $M$ be a pointed connected locally contractible topological space such that its fundamental group belongs to one of the following classes:

(i) hyperbolic groups;

(ii) finite-dimensional CAT(0) groups; %acting on a fin-dim CAT(0) space; the condition for Burghelea only is weaker

(iii) mapping class groups of compact orientable surfaces with punctures;

(iv) systolic groups;

(v) compact 3-manifold groups; %FJ is true without compact, but EM ask this 

(vi) Coxeter groups;

(vii) right-angled Artin groups;

(viii) solvable groups of finite Hirsch lenghth.

%(ix) one-relator groups %KH needed here which is alright -- later

Then the topological Chern character

$$
    \rK^\tp(\Loc(M,\bbC))\otimes\bbC \ra \HP(\Loc(M,\bbC))
$$

is an equivalence.

\end{theo}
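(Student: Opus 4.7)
The plan is to reduce the statement to the group algebra case already handled in Proposition~\ref{lcgpalg}, using derived nilpotent invariance (Corollary~\ref{lcnil}) as the bridge. The starting point is the identification $\Loc(M,\bbC) \simeq \Perf(C_\ast(\Omega M))$ recalled just before Corollary~\ref{locsysnil}, which presents the category of local systems as perfect modules over the connective $\bbC$-dg-algebra $A := C_\ast(\Omega M)$. Since $M$ is path-connected, $\pi_0(\Omega M) = \pi_1 M$, and $H_0(A) = \bbC[\pi_1 M]$ as an algebra.

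The key observation is that the canonical truncation $A \to H_0(A) = \bbC[\pi_1 M]$ is a map of connective dg-algebras that is an isomorphism on $\pi_0$; a fortiori it is a nilpotent extension in the sense used earlier (the kernel on $\pi_0$ is zero, hence trivially nilpotent). Applying Corollary~\ref{lcnil} to this extension, the complexified topological Chern character
$$\rK^\tp(A)\otimes \bbC \lra \HP(A/\bbC)$$
is an equivalence if and only if the analogous map for $\bbC[\pi_1 M]$ is. Under the hypotheses on $\pi_1 M$, Proposition~\ref{lcgpalg} guarantees that the latter is an equivalence, which yields the theorem after translating back via $\Loc(M,\bbC) \simeq \Perf(A)$ and Morita-invariance of $\rK^\tp$ and $\HP$.

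There is essentially no new technical obstacle: both halves of the argument are already present in the paper. The only point requiring a line of care is the verification that $C_\ast(\Omega M) \to \bbC[\pi_1 M]$ is genuinely a nilpotent extension in the sense of Section~5 (i.e.\ it surjects on $\pi_0$ with nilpotent kernel); here it is an isomorphism on $\pi_0$, so this is immediate. If one prefers, one can alternatively invoke Corollary~\ref{locsysnil} directly on the $\rK^\st$-side, and then observe that the $\HP$-side collapses in the same way because $\HP$ is also nil-invariant (Theorem~\ref{GHP}), giving the same commutative square used in the proof of Corollary~\ref{lcnil}.
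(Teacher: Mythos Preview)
Your proposal is correct and follows exactly the paper's own argument: the paper's proof is the single line ``Apply Corollary~\ref{lcnil} and Proposition~\ref{lcgpalg},'' and you have simply unpacked what that entails, including the identification $\Loc(M,\bbC)\simeq\Perf(C_\ast(\Omega M))$ and the observation that $C_\ast(\Omega M)\to\bbC[\pi_1M]$ is an isomorphism on $\pi_0$, hence trivially a nilpotent extension.
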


\begin{proof}

Apply Corollary~\ref{lcnil} and Proposition~\ref{lcgpalg}.

\end{proof}

\begin{remar}\label{FJlocsys}

The Corollary~\ref{locsysnil} shows that extending the lattice conjecture from group algebras to local systems does not bring new objects, but adds flexibility. Larger supply of maps in the category of spaces should make semi-topological K-theory of local systems more computable. Remark~\ref{FJspecul} suggests to search for a space $M$ with a fundamental group $G$, satisfying the Burghelea conjecture, such that the complexified topological Chern character has a nontrivial kernel -- the fundamental group of such a space would provide a counterexample to the usual Farrell-Jones conjecture. It might be easier sometimes to find a model with nice geometrical properties for such $M$ than for $\rB G$.

\end{remar}

\begin{remar}

The category $\Perf(C_\ast(\Omega M))$ has one more incarnation. When $M$ is a smooth connected manifold, $\Perf(C_\ast(\Omega M))$ is Morita-equivalent to the wrapped Fukaya category of the cotangent bundle $T^\ast M$ (see Theorem 1.1, Corollary 6.1 \cite{GPS}, or Theorem 1.1 \cite{Ab} for the earlier treatment of closed oriented case). Theorem~\ref{lclocsys} guarantees that, under certain conditions on $\pi_1M$, $\HP(\cW(T^\ast M))$ comes with an integral structure. Since the topological Chern character is a map of weakly localizing invariants, Lemma 6.2 and Corollary 6.3 \cite{GPS} allow to extend this structure to certain plumbings -- it is enough to check  joint fully-faithfullness of the maps in the diagrams (6.2), (6.3). The next logical step would be to try and generalize Theorem~\ref{lclocsys} to certain categories of constructible sheaves, which are related to more complicated wrapped Fukaya categories.

\end{remar}

%\begin{exam}

%About topological Fukaya with a reference to Dyckerhoff.

%\end{exam}

\ \

\end{document}